\newtheorem{prop}[theorem]{Proposition}%
\newtheorem{assumption}[theorem]{Assumption}%
\newtheorem{cor}[theorem]{Corollary}
\renewcommand{\d}{\, \mathrm{d}}
\renewcommand{\L}{\mathrm{L}} 
\renewcommand{\H}{\mathrm{H}} 
\newcommand{\e}{\mathrm{e}} 
\journalname{}
\begin{document}

\title{
BIBO stability for funnel control: semilinear internal dynamics with unbounded input and output operators
\thanks{
This research was partially conducted with the financial support of F.R.S-FNRS, Belgium. A. H. is a FNRS Research Fellow under the grant CR 40010909.\newline
The second and third named authors are supported by the German Research Foundation (DFG) via the joint grant JA 735/18-1 / SCHW 2022/2-1.}
}

\author{Anthony Hastir         \and
Ren\'e Hosfeld \and 
        Felix L.~Schwenninger     \and
        Alexander A.~Wierzba 
}

\institute{\Letter\,\, A. Hastir \at
              Department of mathematics and Namur Institute for Complex Systems (naXys), University of Namur, Rue de Bruxelles, 61, Namur, B-5000, Belgium\\
              \email{anthony.hastir@unamur.be}
           \and
              R. Hosfeld \at
              University of Wuppertal, School of Mathematics and Natural Sciences,
              IMACM, Gaußstraße 20, D-42119 Wuppertal, Germany\\
              \email{hosfeld@uni-wuppertal.de}
           \and
           F.L. Schwenninger, A.A. Wierzba \at
              Department of Applied Mathematics, University of Twente, P.O. Box 217, Enschede, 7500, The Netherlands\\
              \email{f.l.schwenninger@utwente.nl, a.a.wierzba@utwente.nl}
}

\date{}

\maketitle

\begin{abstract}
This note deals with Bounded-Input-Bounded-Output (BIBO) stability for semilinear infinite-dimensional dynamical systems allowing for boundary control and boundary observation. We give sufficient conditions that guarantee BIBO stability based on Lipschitz conditions with respect to interpolation spaces. Our results can be applied to guarantee feasibility of funnel control for coupled ODE-PDE systems, as shown by means of an example from  chemical engineering.

\keywords{BIBO stability -- Nonlinear infinite-dimensional systems -- Unbounded input and output operators -- Funnel control -- Analytic semigroups}
\end{abstract}


\section{Introduction}
In this note we report recent progress in the adaptive control of certain classes of nonlinear, infinite-dimensional systems. This is done by studying the interplay of two relatively well-studied concepts from finite-dimensional theory; \emph{funnel control} and \emph{Bounded-Input-Bounded-Output stability (BIBO stability)}. Whereas the latter is classical in the history of systems theory, funnel control has only been established in the last twenty years starting from seminal work by Ilchmann--Sangwin--Ryan \cite{Ilchmann2002}, see the survey \cite{Berger2020funnel} and the references therein. For both infinite-dimensional theory exists only partially, in particular in the nonlinear case. 

Funnel control has shown to be intimately connected to BIBO stability of the internal dynamics if the system allows for a relative degree, see~\cite{Ilchmann2002,BergerFunnelAutomatica,BergerPucheSchwenninger} for the evolution of such ``Funnel Theorems'' and their applications to both finite and infinite-dimensional systems. From an analytic point of view, this relative degree result is an input-output behaviour given by an ordinary differential equation coupled with a possibly infinite-dimensional system. In our setting, the latter \emph{internal dynamics} are represented by a semilinear partial differential equation with various assumptions on the nonlinearity. The power of the above mentioned ``Funnel Theorems'' lies in the fact that BIBO stability of the internal dynamics is essentially sufficient for guaranteeing that funnel control works.

The funnel controller is an adaptive model-free output error feedback whose objective is to let the output of a dynamical system follow a predetermined (time-varying) reference signal. In contrast to classical tracking, this objective is fulfilled in the sense that the output error has to remain within prescribed funnel boundaries. The funnel controller is adaptive in the sense that the time-varying gain function adapts to the current value of the output error. This field of adaptive control has attracted a lot of attention in the last few years. For systems with relative degree one, funnel control is extensively developed in \cite{Ilchmann2002} and \cite{Ilchmann2005}. Few years later, funnel control has been applied to MIMO systems with known strict relative degree, see~\cite{Berger2021}. The different types of dynamical systems for which funnel control is conducive are listed in \cite{Berger2021Bis}.

BIBO stability of the internal dynamics is the crucial property when proving that funnel control indeed can be applied successfully, provided the system under investigation is of relative degree form. However, checking for BIBO stability for infinite-dimensional systems turns out to be subtle: In \cite{Berger2020funnel} a funnel controller was developed to regulate the position of a moving water tank, with internal dynamics being approximated by a linear wave equation. The main ingredient of that proof rests on carefully investigating the transfer function and showing that its inverse Laplace transform exists as measure of bounded total variation, see also \cite{BergerPucheSchwenninger} and, also the Callier--Desoer class, \cite{CurtainZwartNew}. Indeed, for linear systems---including boundary control and observation---with finite-dimensional input and output spaces, this property characterizes BIBO stability, \cite{SWZ_BIBOStability}. A class of systems with semilinear internal dynamics was considered in \cite{Hastir_Funnel}, under additional assumptions on the linear part and global Lipschitz continuity of the nonlinearity. This extended earlier works \cite{IlchmannByrnesIsidori} for linear systems involving a Byrnes--Isidori normal form. However, a drawback of this approach is that boundary control and observation seems to be excluded, which was one of the starting points of the present paper.

We point out that funnel control has found application in a much wider context, but we focus our presentation to the theory relevant for our results.
In particular, Funnel control was recently applied to general systems given by partial differential equations, which are not of the form allowing for a relative degree, see \cite{Berger2021} and \cite{PucheSchwenningerReis}. Note also that funnel control has been lately coupled to model-predictive-control (Funnel MPC) for nonlinear systems with relative degree one, see~\cite{Berger2021funnel}.

In this paper, we use $\L^{\infty}$-BIBO stability to treat semilinear internal dynamics allowing for unbounded input and output operators. This result is then used to conclude feasibility of funnel control. As the latter is a direct application of known Funnel Theorems, the main contribution of the paper lies in carefully exploring BIBO stability for such systems. To do so,  we follow the idea of writing the nonlinear state space system as the interconnection of an extended linear system and a nonlinear feedback. The main result consists in showing that, provided that the nonlinear operator satisfies some global Lipschitz condition, the whole nonlinear system is $\L^\infty$-BIBO stable. To apply our results, we highlight the fact that the linear part of the dynamics has to satisfy some regularity assumptions, which covers a class of applications driven by parabolic partial differential equations.

The paper is organized as follows: \Cref{BIBO_Section} aims at defining the notion of $\L^\infty$-BIBO stability for nonlinear infinite-dimensional systems with general input and output operators. A way of deriving BIBO stability of an extended linear system constructed as the original linear system with augmented input and output operators is detailed in \Cref{Section_Interconnection}. BIBO stability of the nonlinear system is shown in \Cref{Section_NonlinearOperator} in the case where the nonlinear operator is globally Lipschitz, viewed as an operator defined on a more regular space than the state space, constructed as the domain of the fractional powers of the opposite of the linear operator dynamics. An example of a heat equation with a locally Lipschitz nonlinearity and internal control is tackled in \Cref{Section_Local_Lipschitz}, which requires a different approach than  in \Cref{Section_NonlinearOperator}. 
Funnel control is applied in \Cref{Section_Application_Chemical} to a nonlinear convection-reaction-diffusion system coupled to a linear ordinary differential equation. The needed assumption of BIBO stability of the internal dynamics is shown thanks to our main results. Conclusions and perspectives are given in \Cref{Section_CCL}.

\section{BIBO stability of semi-linear state space systems}\label{BIBO_Section}
In the following let $U,X,Y$ refer to Banach spaces and  by $\mathcal{L}(X,Y)$ we denote the bounded linear operators from $X$ to $Y$.
Recall the notion of a system node going back to \cite{Staffans}. A quadruple $\Sigma(A,B,C,\mathbf{G})$ is called a \emph{system node} for $(U,X,Y)$ if $A$ is the generator of a strongly continuous semigroup $(T(t))_{t\geq 0}$ of linear operators on $X$, $B\in\mathcal{L}(U,X_{-1})$, $C\in \mathcal{L}(X_{1},Y)$ and $\mathbf{G}:\mathbb{C}_{\omega_{0}}\to \mathcal{L}(U,Y)$ is a transfer function, i.e.\ $\mathbf{G}$ is a holomorphic function satisfying the equation 
\begin{equation*}
    \mathbf{G}(s)-\mathbf{G}(t)=C\left[(sI-A)^{-1}-(tI-A)^{-1}\right]B,\qquad s,t\in\mathbb{C}_{\omega_0},
\end{equation*}
where $\mathbb{C}_{\omega_{0}}=\{z\in\mathbb{C}\colon \Re(z)>\omega_0\}$ and $\omega_0$ equals the growth bound of the semigroup. The space $X_{1}$ refers to the domain $D(A)$ of $A$ equipped with the graph norm, whereas $X_{-1}$ denotes the completion of $X$ with respect to the norm $\|(\beta I-A)^{-1}\cdot\|_X$ for some $\beta\in\rho(A)$. For details on this concept we refer to \cite{Staffans}. 
We use the following notion of a semilinear state space system and its mild solutions.
\begin{definition}[Semilinear state space system]\label{def:semilinear_state_space_system}
    Let $\Sigma(A,B,C,\mathbf{G})$ be a linear system node with state space $X$, semigroup generator $A: D(A) \subset X \rightarrow X$, input space $U$, input operator $B: U \rightarrow X_{-1}$, output space $Y$ and output operator $C: X_1 \rightarrow Y$. Let $C\&D: D(C\&D) \rightarrow Y$ be the associated combined output/feedthrough operator \cite{Staffans,TucsnakWeiss2014} given by
    \begin{align*}
        D(C\&D)={}&\left\{\begin{bmatrix}x\\u\end{bmatrix}\in X\times U \colon Ax+Bu\in X \right\}\\
        C\&D\begin{bmatrix}x\\u\end{bmatrix}={}&C\left[x-(sI-A)^{-1}Bu\right]+\mathbf{G}(s)u,
    \end{align*}
    for some fixed $s\in\mathbb{C}_{\omega_{0}}$.
    Furthermore, let $f: Z \rightarrow X$ be a nonlinear function with $Z \subset X$ a continuously embedded subspace. Then the pair $(\Sigma, f)$ formally representing the equations
     \begin{equation}
     \label{eq:semilinStateSpaceNode}
         \left\{
         \begin{array}{l}
             \Dot{x}(t) = A x(t) + B u(t) + f(x(t)), \qquad x(0)=x_{0}, \\
             y(t) = C\&D \begin{bmatrix} x(t) \\ u(t) \end{bmatrix},
         \end{array}\right.
     \end{equation}
    where $t>0$, is called a \emph{semilinear state space system}.
\end{definition}

\begin{remark}
    Here, the space $Z$ will usually be some interpolation space $X_1 \subset X_\alpha \subset X$ with $0 \leq \alpha \leq 1$.
\end{remark}

\begin{definition}[Mild solution of a semilinear state space system]\label{def:mildSolution}
    Let $x_0 \in X$, $T > 0$ and $u \in \L^1_\textrm{loc}\left([0,T]; U\right)$. A triple $(u,x,y)$ is called a \emph{mild solution} to the semilinear state space system \eqref{eq:semilinStateSpaceNode} on $[0,T]$ with initial value $x_0$  if
    \begin{enumerate}
        \item $x:[0,T] \rightarrow X_{-1}$ and $x(t) \in Z$ for almost all $t \in[0,T]$ and $f(x(\cdot))\in \L^1_\textrm{loc}\left([0,T]; X\right)$ 

        \item $x$ solves 
        \begin{equation*}
            x(t) = T(t) x_0 + \int_0^t T(t-s) \left[ f(x(s)) + B u(s) \right] \d s
        \end{equation*}
        in $X_{-1}$ for all $t \in [0,T]$;
        \item $y$ is a $Y$-valued distribution given by 
        \begin{equation}\label{eq:outputDistExpression}
            y(t) = \frac{\textrm{d}^2}{\textrm{d}t^2}  \left( \left( C\&D \right) \int_0^t (t - s) \begin{bmatrix} x(s) \\ u(s) \end{bmatrix} \d s \right),
        \end{equation}
        meaning that it acts on test functions $\varphi \in \mathcal{C}^\infty_c(\mathbb{R}_{\geq 0};Y')$ as
        \begin{equation*}
        y[\varphi] = \int_0^\infty \left\langle \varphi''(t) , \left( C\&D \right) \int_0^t (t - s) \begin{bmatrix} x(s) \\ u(s) \end{bmatrix} \d s \right\rangle_{Y', Y} \d t.
    \end{equation*}
    \end{enumerate}
    A triple $(u,x,y)$ is called a \emph{global mild solution} to the semilinear state space system, if $(u\mid_{[0,T]},x\mid_{[0,T]},y\mid_{[0,T]})$ is a mild solution on $[0,T]$ for every $T > 0$. If $u$ and $y$ are clear from the context, e.g.\ if $y=x$, we simply refer to $x$ as the mild solution $(u,x,y)$.
\end{definition}
\begin{remark}
    Part 3 of \Cref{def:mildSolution} is equivalent to the statement, that $\left( \left[ \begin{smallmatrix}
            u \\ f(x)
        \end{smallmatrix} \right], x, \left[ \begin{smallmatrix}
            y \\ x
        \end{smallmatrix} \right] \right)$ is a generalised solution of the extended system node 
        \begin{equation*}
            \Sigma\left(A, \left[ \begin{smallmatrix} B & I \end{smallmatrix} \right], \left[ \begin{smallmatrix} C \\ I \end{smallmatrix} \right], \left[ \begin{smallmatrix} \mathbf{G} & C (\cdot I - A)^{-1} \\ (\cdot I - A)^{-1} B & (\cdot I - A)^{-1} \end{smallmatrix} \right] \right)
        \end{equation*}
        as defined in \cite[Definition~4.7.10]{Staffans}. 
        Note that this also guarantees that the integral appearing in Equation \eqref{eq:outputDistExpression} indeed always lies in $D(C\&D)$ and thus the application of $C\&D$ is well-defined \cite[Lemma~4.7.9]{Staffans}.
\end{remark}
With this solution concept we can define BIBO stability for the considered semilinear state space systems.
\begin{definition}[$\L^\infty$-BIBO stability]\label{def:LInfty_BIBO}
    Let $(\Sigma, f)$ be a semilinear state space system. Then it is called \emph{$\L^\infty$-BIBO stable} if the following two conditions are satisfied.
    \begin{enumerate}
        \item for any $u \in \L^\infty_{\text{loc}}(\mathbb{R}_{\geq 0}; U)$ there exists a global mild solution $(u,x,y)$ with $x_0 = 0$ and $y \in \L^\infty_\textrm{loc}\left(\mathbb{R}_{\geq 0}; Y \right)$,
        \item for any $c_U > 0$ there exists a constant $c_Y > 0$ such that for any $t > 0$ and any global mild solution $(u,x,y)$ of $(\Sigma, f)$ with $x_0 = 0$, $u \in \L^\infty_{\text{loc}}(\mathbb{R}_{\geq 0}; U)$, the following implication holds:
        \begin{equation*}
        \| u \|_{\L^\infty([0,t];U)} < c_U \quad\implies\quad\| y \|_{\L^\infty([0,t];Y)} < c_Y.
        \end{equation*}
    \end{enumerate}
\end{definition}
We recall the following definition of admissible control operators \cite{Weiss89ControlOperators}. 
\begin{definition}[Admissible control operators]
    Let $A: D(A) \subset X \rightarrow X$ be the generator of a strongly continuous semigroup $(T(t))_{t \geq 0}$ and $1 \leq p \leq \infty$. Then $B \in \mathcal{L}(U, X_{-1})$ is called a \emph{$\L^p$-admissible control operator} (or just \emph{$\L^p$-admissible}) if for some (and hence for all) $t > 0$ there exists a (minimal) constant $C_t>0$ such that
    \begin{equation*}
    \left\lVert \int_0^t T(t-s) Bu(s) \d s \right\rVert_X \leq C_t \lVert u \rVert_{\L^p([0,t];U)}
    \end{equation*}
    holds for all $u \in \L^p([0,t]; U)$. If $C_\infty \coloneqq \sup_{t >0}C_t< \infty$, then $B$ is called \emph{infinite-time $\L^p$-admissible}.
\end{definition}

\begin{remark}
In the above definition, we implicitly assumed that for some (and hence for all) $t>0$ and all $u \in \L^p(\mathbb{R}_{\geq 0};U)$ it holds that
 \begin{equation*}
    \int_0^t T(t-s) B u(s) \d s \in X,
\end{equation*}
which is even sufficient for $\L^p$-admissibility of $B$, see for instance \cite[Proposition~4.2]{Weiss89ControlOperators}. Moreover, it can be concluded from the proof of \cite[Proposition~2.5]{Weiss89ControlOperators} that, if $(T(t))_{t \geq 0}$ is exponentially stable, i.e. there exists $M \geq 1$ and $\omega>0$ such that
$\lVert T(t) \rVert \leq M \e^{- \omega t}$ for every $t \geq 0$, then $\L^p$-admissibility and infinite-time $\L^p$-admissibility of $B$ are equivalent.
\end{remark}

\section{Linear systems with nonlinear feedback and BIBO stability}\label{Section_Interconnection}
A way of approaching the question of BIBO stability for systems written as in \eqref{eq:semilinStateSpaceNode} is explained in this section together with some preliminary result. 
One possible approach to study semilinear systems of the form of Equation \eqref{eq:semilinStateSpaceNode} that was for instance used in \cite{HastirSCL}, is to rewrite the system in the form as depicted in \Cref{fig:FeedbackLoop} and consider the nonlinearity as a nonlinear feedback loop attached to an extended linear system.

This way it is possible to employ properties of the linear system to derive properties of the semilinear one. Here the most relevant property of the linear system for our discussions is naturally its $\L^\infty$-BIBO stability.

The following proposition provides a sufficient condition for when this is the case.

\begin{figure}[h!]
\includegraphics[scale=1]{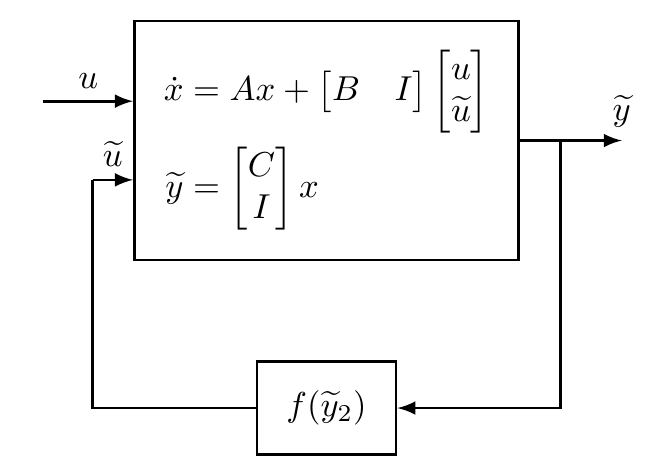}
\centering
\caption{Nonlinearity as feedback loop}
\label{fig:FeedbackLoop}
\end{figure}

\begin{prop}
\label{prop:extendedLinSystemBIBO}
Let $\Sigma(A,B,C,\mathbf{G})$ be a system node with $A$ the generator of an exponentially stable semigroup. 
Then the extended system node $\Sigma\left(A, \left[ \begin{smallmatrix} B & I \end{smallmatrix} \right], \left[ \begin{smallmatrix} C \\ I \end{smallmatrix} \right], \left[ \begin{smallmatrix} \mathbf{G} & C (\cdot I - A)^{-1} \\ (\cdot I - A)^{-1} B & (\cdot I - A)^{-1} \end{smallmatrix} \right] \right)$ is $\L^\infty$-BIBO stable if all of the following hold:
\begin{itemize}
    \item $\Sigma(A,B,C,\mathbf{G})$ is $\L^\infty$-BIBO stable.
    \item $B$ is an $\L^\infty$-admissible control operator.
    \item $\Sigma(A,I,C,C (\cdot I - A)^{-1})$ is $\L^\infty$-BIBO stable.
\end{itemize}
\end{prop}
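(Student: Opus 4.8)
The plan is to unravel the definition of $\L^\infty$-BIBO stability for the extended system node and to check that a bounded input $\left[\begin{smallmatrix} u \\ v \end{smallmatrix}\right] \in \L^\infty_\mathrm{loc}(\mathbb{R}_{\geq 0}; U \times X)$ produces a bounded output $\left[\begin{smallmatrix} y \\ x \end{smallmatrix}\right]$. Here $v$ plays the role of the signal $f(x)$ entering through the identity control operator $I\in\mathcal{L}(X,X_{-1})$, and the extra output component is simply the state $x$ itself. The state of the extended system satisfies, in $X_{-1}$,
\begin{equation*}
 x(t) = \int_0^t T(t-s)\bigl[Bu(s) + v(s)\bigr]\d s, \qquad x_0 = 0,
\end{equation*}
so by the superposition principle for linear systems it splits as $x = x^B + x^I$, where $x^B$ is the state response of $\Sigma(A,B,C,\mathbf{G})$ to $u$ and $x^I$ is that of $\Sigma(A,I,C,C(\cdot I - A)^{-1})$ to $v$. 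Because the combined output/feedthrough operators are compatible with this splitting — the transfer function block $\left[\begin{smallmatrix} \mathbf{G} & C(\cdot I-A)^{-1} \end{smallmatrix}\right]$ of the first output row is exactly the concatenation of the transfer functions of the two subsystems — the first output component likewise decomposes as $y = y^B + y^I$, with $y^B$ the output of $\Sigma(A,B,C,\mathbf{G})$ driven by $u$ and $y^I$ the output of $\Sigma(A,I,C,C(\cdot I-A)^{-1})$ driven by $v$.

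With this decomposition in hand the three bulleted hypotheses can be invoked directly. First, existence of a global mild solution with the required $\L^\infty_\mathrm{loc}$ regularity of $\left[\begin{smallmatrix} y \\ x \end{smallmatrix}\right]$: $\L^\infty$-BIBO stability of $\Sigma(A,B,C,\mathbf{G})$ yields $y^B \in \L^\infty_\mathrm{loc}$, $\L^\infty$-BIBO stability of $\Sigma(A,I,C,C(\cdot I - A)^{-1})$ yields $y^I \in \L^\infty_\mathrm{loc}$, hence $y \in \L^\infty_\mathrm{loc}$; and $\L^\infty$-admissibility of $B$ together with (infinite-time $\L^\infty$-)admissibility of $I$ — which holds automatically for an exponentially stable semigroup, as noted in the remark following the definition of admissible control operators — gives $x = x^B + x^I \in \L^\infty_\mathrm{loc}(\mathbb{R}_{\geq 0};X)$. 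For the quantitative bound, fix $c_{U\times X} > 0$ and suppose $\|\left[\begin{smallmatrix} u \\ v \end{smallmatrix}\right]\|_{\L^\infty([0,t];U\times X)} < c_{U\times X}$, so that in particular $\|u\|_{\L^\infty([0,t];U)} < c_{U\times X}$ and $\|v\|_{\L^\infty([0,t];X)} < c_{U\times X}$. Applying the BIBO constant of $\Sigma(A,B,C,\mathbf{G})$ bounds $\|y^B\|_{\L^\infty([0,t];Y)}$, applying that of $\Sigma(A,I,C,C(\cdot I-A)^{-1})$ bounds $\|y^I\|_{\L^\infty([0,t];Y)}$; summing bounds $\|y\|_{\L^\infty([0,t];Y)}$. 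For the state component, the exponential stability of $T$ gives a time-uniform admissibility constant for $B$ and the elementary estimate $\|x^I(t)\|_X \leq \int_0^t M\e^{-\omega(t-s)}\d s\,\|v\|_{\L^\infty([0,t];X)} \leq \tfrac{M}{\omega}\|v\|_{\L^\infty([0,t];X)}$, so $\|x\|_{\L^\infty([0,t];X)}$ is bounded by a constant times $c_{U\times X}$. Taking $c_Y$ to be the sum of these four contributions completes the verification.

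The one point requiring genuine care — and the step I expect to be the main obstacle — is making the superposition argument rigorous at the level of the system-node/generalised-solution formalism, rather than merely at the level of the mild-solution integral formula for $x$. Concretely, one must check that the output distribution $y$ of the extended node, defined via the second distributional derivative of $(C\&D)\int_0^t(t-s)[\cdots]\d s$ as in Equation \eqref{eq:outputDistExpression}, really is the sum of the corresponding distributions for the two subsystems, and that $y^B$ and $y^I$ inherit the $\L^\infty_\mathrm{loc}$ property from the BIBO stability of the subsystems exactly in the sense of Definition \ref{def:LInfty_BIBO}. This is a matter of checking that the $C\&D$ operator of the extended node restricts correctly on the relevant domains and that the block-diagonal/concatenated structure of the transfer function is respected by the integral expressions; it follows from the definition of the extended system node together with \cite[Lemma~4.7.9, Definition~4.7.10]{Staffans}, but it should be spelled out. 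Once this linearity/compatibility bookkeeping is settled, the rest is the routine splitting and estimation sketched above.
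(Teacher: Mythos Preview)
Your proposal is correct and follows essentially the same approach as the paper: decompose the state and output of the extended node by linearity into the contributions of the four subsystems $\Sigma(A,B,C)$, $\Sigma(A,I,C)$, $\Sigma(A,B,I)$, $\Sigma(A,I,I)$, and bound each piece using the three hypotheses together with exponential stability. The paper dispatches the point you flag as the main obstacle---that the distributional output really splits as $y=y^B+y^I$---by an explicit computation of the extended $\widetilde{C\&D}$ operator, which is exactly the bookkeeping you anticipate.
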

\begin{proof}

By \cite[Corollary~6.2]{SWZ_BIBOStability}, from the $\L^\infty$-admissibility of $B$ together with the exponential stability, it follows that the system node  $\Sigma(A,B,I,(\cdot I - A)^{-1} B)$ is $\L^\infty$-BIBO stable. Analogously the same follows for the system node $\Sigma(A,I,I,(\cdot I - A)^{-1})$.

By the $\L^\infty$-BIBO stability of the respective system nodes it follows that there are constants $c, c_C, c_B, c_I > 0$ such that for any $\left[ \begin{smallmatrix} u \\ \widetilde{u} \end{smallmatrix} \right] \in \L_{\rm loc}^\infty(\mathbb{R}_{\geq 0}; U \times X)$ and $x_0 = 0$  there are the following solutions which then, for any $t > 0$, satisfy the corresponding inequalities:
\begin{itemize}
    \item $(u, x, y)$ of $\Sigma(A,B,C,\mathbf{G})$ with $y \in \L_{\rm loc}^\infty(\mathbb{R}_{\geq 0}; Y)$ and $\| y \|_{\L^\infty([0,t]; Y)} \leq c \| u \|_{\L^\infty([0,t]; U)}$,
    \item $(u, x_B, x_B)$ of $\Sigma(A,B,I,(\cdot I - A)^{-1} B)$ with $x_B \in \L_{\rm loc}^\infty(\mathbb{R}_{\geq 0}; X)$ and $\| x_B \|_{\L^\infty([0,t]; X)} \leq c_B \| u \|_{\L^\infty([0,t]; U)}$,
    \item $(\widetilde{u}, x_C, y_C)$ of $\Sigma(A,I,C,C (\cdot I - A)^{-1})$ with $y_C \in \L_{\rm loc}^\infty(\mathbb{R}_{\geq 0}; Y)$ and $\| y_C \|_{\L^\infty([0,t]; Y)} \leq c_C \| \widetilde{u} \|_{\L^\infty([0,t]; X)}$,
    \item $(\widetilde{u}, x_I, x_I)$ of $\Sigma(A,I,I,(\cdot I - A)^{-1})$ with $x_I \in \L_{\rm loc}^\infty(\mathbb{R}_{\geq 0}; Y)$ and $\| x_I \|_{\L^\infty([0,t]; X)} \leq c_I \| \widetilde{u} \|_{\L^\infty([0,t]; X)}$.
\end{itemize}
Clearly we also have $x_B = x$ and $x_C = x_I$.

Then we find for the extended system that the state corresponding to the input $\left[ \begin{smallmatrix} u \\ \widetilde{u} \end{smallmatrix} \right]$ takes the form
\begin{equation*}
    \widetilde{x}(t) = \int_0^t T(t-s) \begin{bmatrix} B & I \end{bmatrix} \begin{bmatrix} u(s) \\ \widetilde{u}(s) \end{bmatrix} \d s = x(t) + x_C(t).
\end{equation*}
We furthermore observe that the combined output/feedthrough operator $\widetilde{C\&D}$ for this extended system node acts as
\begin{equation*}
\begin{split}
    \widetilde{C\&D} \begin{bmatrix} \widetilde{x} \\ \begin{bmatrix} u \\ \widetilde{u} \end{bmatrix} \end{bmatrix} &= \begin{bmatrix} C \\ I \end{bmatrix} \left( \widetilde{x} - (\beta I - A)^{-1} \begin{bmatrix} B & I\end{bmatrix} \begin{bmatrix} u \\ \widetilde{u} \end{bmatrix} \right) + \left[ \begin{smallmatrix} \mathbf{G}(\beta) & C (\beta I - A)^{-1} \\ (\beta I - A)^{-1} B & (\beta I - A)^{-1} \end{smallmatrix} \right] \begin{bmatrix} u \\ \widetilde{u} \end{bmatrix} \\
     &= \begin{bmatrix} C \\ I \end{bmatrix} \left( x - (\beta I - A)^{-1} B u \right)
     + \begin{bmatrix} C \\ I \end{bmatrix} \left( x_I - (\beta I - A)^{-1}  \widetilde{u} \right)
     + \left[ \begin{smallmatrix} \mathbf{G}(\beta) u + C (\beta I - A)^{-1} \widetilde{u} \\ (\beta I - A)^{-1} B u + (\beta I - A)^{-1} \widetilde{u} \end{smallmatrix} \right] \\
    &= \begin{bmatrix} y + y_C \\  x + x_C \end{bmatrix}
\end{split}
\end{equation*}
and we thus observe that the output $\widetilde{y}$ of the extended system node is given by
\begin{equation*}
    \widetilde{y} = \begin{bmatrix} y + y_C \\ x_B + x_I \end{bmatrix}
\end{equation*}
a-priori in a distributional sense, but thus also as $\widetilde{y} \in \L_{\rm loc}^\infty(\mathbb{R}_{\geq 0}; Y \times X)$.  This shows the existence of a solution $(\left[ \begin{smallmatrix} u \\ \widetilde{u} \end{smallmatrix} \right], \widetilde{x}, \widetilde{y})$ with $\widetilde{y} \in \L_{\rm loc}^\infty(\mathbb{R}_{\geq 0}; Y \times X)$ of the extended system node.

Furthermore we then find for all $t > 0$
\begin{equation*}
    \begin{split}
        \| \widetilde{y} \|_{\L^\infty([0,t]; Y \times X)} &\lesssim 
        \| y \|_{\L^\infty([0,t]; Y)} + 
        \| y_C \|_{\L^\infty([0,t]; Y)} +
        \| x_B \|_{\L^\infty([0,t]; X)} +
        \| x_I \|_{\L^\infty([0,t]; X)} \\
        &\leq c \| u \|_{\L^\infty([0,t]; U)} + 
        c_C \| u \|_{\L^\infty([0,t]; U)} +
        c_B \| \widetilde{u} \|_{\L^\infty([0,t]; X)} +
        c_I \| \widetilde{u} \|_{\L^\infty([0,t]; X)} \\
        &\lesssim \left\| \begin{bmatrix}
            u \\ \widetilde{u}
        \end{bmatrix} \right\|_{\L^\infty([0,t]; U \times X)}
    \end{split}
\end{equation*}
which finishes the proof. \qed
\end{proof}

\begin{remark}\label{rem:extendedlinearsys}
    \begin{enumerate}
        \item In the following we will use the notation $\Sigma(A,B,C)$ to refer to a system node $\Sigma(A,B,C,\mathbf{G})$ if it is clear from the context which transfer function $\mathbf{G}$ is to be used.
    \item
    One can straightforwardly extend \Cref{prop:extendedLinSystemBIBO} to the case of the extended linear system
    \begin{equation*}
        \Sigma\left(A, \begin{bmatrix} B & \widetilde{B} \end{bmatrix} ,  \begin{bmatrix} C \\ \widetilde{C} \end{bmatrix} ,  \begin{bmatrix} \mathbf{G} & C (\cdot I - A)^{-1} \widetilde{B} \\ \widetilde{C} (\cdot I - A)^{-1} B & \widetilde{C} (\cdot I - A)^{-1} \widetilde{B} \end{bmatrix}  \right)
    \end{equation*}
    where $\widetilde{B} \in \mathcal{L}(U,X)$ and $\widetilde{C} \in \mathcal{L}(X,Y)$.
\item We note that the assumption that $\Sigma(A,I,C)$ is ${\rm L}^{\infty}$-BIBO stable excludes boundary observation if $A$ even generates a strongly continuous group. Indeed, under this assumption it is shown in \cite[Proposition~6.6]{SWZ_BIBOStability} that $ \L^{\infty}$-BIBO stability of $\Sigma(A,I,C)$ implies that $C$ must be a bounded operator.
\item The exponential stability assumed in the proposition cannot be dropped, as the subsystem $\Sigma(A,I,I)$ is obviously not BIBO if e.g.\ $A=0$. 
\end{enumerate}
\end{remark}

\section{Different types of uniformly Lipschitz nonlinearities}\label{Section_NonlinearOperator}

In this section we consider global Lipschitz nonlinearities $f$ defined on interpolation spaces between $D(A)$ and $X$, if $A$ generates an analytic semigroup, and otherwise on $X$ itself.

The standing assumption of this section is that the (unbounded) linear operator $A: D(A)\subset X\to X$ is the infinitesimal generator of an exponentially stable $C_0$-semigroup $(T(t))_{t\geq 0}$, i.e. there exist $M\geq 1$ and $\omega>0$ such that for all $ t \geq 0$ it holds that
\begin{equation}\label{eq:exp_stable_sg}
\Vert T(t)\Vert\leq M \e^{-\omega t}.
\end{equation}

Additionally, if $A$ generates an analytic semigroup, then according to \cite[Chapter~6]{Pazy} the fractional powers of $-A$, $(-A)^\alpha, 0\leq \alpha\leq 1$, are well-defined and the operator $(-A)^\alpha$ is closed, linear and invertible. Moreover, $D((-A)^\alpha)$ is dense in $X$. As an additional property, the analyticity entails that
\begin{equation}\label{EstimateAnalytic}
    \Vert (-A)^\alpha T(t)\Vert\leq M_\alpha t^{-\alpha} \e^{-\delta t}
 \end{equation}
holds for all $t>0$, where the positive constant $\delta$ is such that the operator $A+\delta I$ is still the infinitesimal generator of an analytic and exponentially stable semigroup and $M_\alpha > 0$ depends only on $\alpha$, see e.g.~\cite[Chapter~2,~Theorem~6.13]{Pazy}. Note that \eqref{EstimateAnalytic} still holds with $\alpha=0$ if $A$ does not generate an analytic semigroup, where we denote $(-A)^0=I$. Indeed, we can choose $\delta= \omega$ and $M_\alpha=M$ with constants $M$ and $\omega$ from \eqref{eq:exp_stable_sg}.

If $A$ generates an analytic semigroup, we denote by $X_\alpha$, $0\leq\alpha\leq 1$ the space $D((-A)^\alpha)$ equipped with the norm $\Vert x\Vert_\alpha \coloneqq \Vert (-A)^\alpha x\Vert_X$, and otherwise we set $\alpha=0$ and write $(-A)^0=I$, $X_0=X$ and $\lVert \cdot \rVert_0 := \lVert \cdot \rVert_X$.

\begin{remark}\label{rem:(-A)^alpha_Linfty-adm}
    Assume that $A$ generates an analytic semigroup 
    \begin{enumerate}
        \item  Every operator $\Tilde{B}\in \mathcal{L}(U,X_{-\beta})$ with $0 \leq \beta <1$ is $\L^p$-admissible for $p > \frac{1}{1-\beta}$, see e.g.~\cite[Propositon~2.13]{SchwenningerISS}. 
        \item It follows from 1. that for $\alpha \in [0,1)$ the extension of $(-A)^\alpha$ to an operator in $\mathcal{L}(X,X_{- \alpha})$ is $\L^\infty$-admissible. Using \eqref{EstimateAnalytic}, we can give an upper bound of the admissibility constant. It holds for $\tilde{u} \in \L^\infty(\mathbb{R}_{\geq 0};X)$ that
        \begin{align}\label{eq:(-A)^alpha_Linfty_admissibility_constant}
            \begin{split}
                \left \| \int_0^t T(t-s) (-A)^{\alpha} \tilde{u}(s) \d s \right\|_X 
                &= \left \| \int_0^t (-A)^{\alpha} T(t-s) \tilde{u}(s) \d s \right\|_X \\
                &\leq M_\alpha \| \tilde{u}\|_{\L^\infty([0,t];X)} \int_0^t (t-s)^{-\alpha} \e^{-\delta(t-s)} \d s\\
                &\leq \frac{M_\alpha \Gamma(1-\alpha)}{\delta^{1-\alpha}}\|\tilde{u} \|_{\L^\infty([0,t];X)}.
            \end{split}
        \end{align}
        Estimate \eqref{eq:(-A)^alpha_Linfty_admissibility_constant} is still valid for $\alpha=0$ without the analyticity of the semigroup generated by $A$.\\
        Assuming just bounded analyticity of the semigroup, i.e. the semigroup is bounded on some sector, instead of exponential stability, \eqref{EstimateAnalytic} holds with $\delta=0$ and thus, we obtain
        \begin{equation*}
            \left \lVert \int_0^t T(t-s) (-A)^\alpha \tilde{u}(s) \d s \right\rVert_X \leq M \lVert \tilde{u} \rVert_{\L^\infty([0,t];X)} \int_0^t (t-s)^{-\alpha} \d s
        \end{equation*}
        which is also valid for not necessarily analytic semigroups if $\alpha=0$. Note that exponential stability together with analyticity implies bounded analyticity.
    \end{enumerate}
    
\end{remark}

Before discussing $\L^\infty$-BIBO stability of \eqref{eq:semilinStateSpaceNode} we give a result on the existence of unique mild solutions of this system (without considering an output). The used methods are well-known and similar results for slightly different situations are available (see e.g.~\cite[Section~6.3]{Pazy} or more recently \cite{Mironchenko_WellPosedness}). For the sake of completeness, we nevertheless give the details.

\begin{lemma}\label{lem:existence_of_mild_solution}
   Let $A$ be the generator of a bounded $C_0$-semigroup. If the semigroup is bounded analytic, let $\alpha \in [0,1)$; else, set $\alpha=0$. Moreover, let $B \in \mathcal{L}(U,X_{-(1-\alpha)})$ be such that $(-A)^\alpha B$ is $\L^\infty$-admissible and $f:\mathbb{R}_{\geq 0} \times X_\alpha \rightarrow X$ is locally Lipschitz in the following sense: there exists a measurable function $g: \mathbb{R}_{\geq 0}^2 \rightarrow \mathbb{R}_{\geq 0}$ such that the following properties hold.
   \begin{itemize}
       \item  $g(\cdot,0) \in \L^\infty_{\rm loc}(\mathbb{R}_{\geq 0};\mathbb{R}_{\geq 0})$,
       \item $g(s,s)=0$ for all $s \geq 0$,
       \item for every bounded set $V \subseteq \mathbb{R}_{\geq 0} \times X_\alpha$ there exists a constant $L>0$ and $0<\nu \leq 1$ such that for every $(t_1,x_1) , (t_2,x_2) \in V$ it holds that
    \begin{equation}\label{eq:assump_time_varying_f}
      \lVert f(t_1,x_1) - f(t_2,x_2) \rVert_X \leq L ( g(t_1,t_2) + \lVert x_1 - x_2 \rVert_\alpha).
    \end{equation}
   \end{itemize}  
    Then for every $t_0 \geq 0$, $x_0 \in X_\alpha$ and $u \in \L^\infty([t_0,\infty);U)$, the system
    \begin{equation}\label{eq:time_varying_system}
        \left\{
        \begin{aligned}
          \dot{x}(t) &= Ax(t) + Bu(t) + f(t,x(t)), \quad t > t_0, \\
             x(t_0)&=x_0
         \end{aligned}
         \right.
    \end{equation}
    admits a unique mild solution $x\in \L^\infty([t_0,t_1];X_\alpha)$ for some $t_1>t_0$ ,
    i.e. $x$ satisfies the implicit equation
    \begin{equation*}
        x(t) = T(t-t_0)x_0 + \int_{t_0}^t T(t-s)Bu(s) \d s + \int_{t_0}^t T(t-s)f(s,x(s)) \d s, \qquad t \in [t_0,t_1].
    \end{equation*}
    Furthermore, if $t_{\rm max}>t_0$ denotes the supremum over all $t_1$ such that \eqref{eq:time_varying_system} admits a solution on $[t_0,t_1]$ in the above sense, then we have the finite blow-up property, i.e.\
    \begin{equation*}
        t_{\rm max}< \infty \quad\implies\quad \lim_{t \nearrow t_{\rm max}} \lVert x(t) \rVert_\alpha = \infty.
    \end{equation*}
        Additionally, if there exists a nondecreasing function $k \in \mathcal{C}([t_0,\infty);\mathbb{R})$ such that for every $t \geq t_0$ and $x \in X_\alpha$,
    \begin{equation}\label{eq:existence_of_solutions_time_bound_for_globalness}
       \lVert f(t,x) \rVert_X \leq k(t) (1 + \lVert x \rVert_\alpha),
    \end{equation}
    then the solution $x$ lies in $\L^\infty_{\mathrm loc}([t_0,\infty);X_\alpha)$.
\end{lemma}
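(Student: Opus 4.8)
The plan is to prove the four assertions---local existence and uniqueness, the finite blow-up property, and global existence under \eqref{eq:existence_of_solutions_time_bound_for_globalness}---by the classical Banach fixed-point scheme for semilinear parabolic equations (cf.\ \cite[Section~6.3]{Pazy}, \cite{Mironchenko_WellPosedness}), adapted so that every estimate is taken in the $X_\alpha$-norm via $(-A)^\alpha$ and so that the unbounded input term is absorbed through the $\L^\infty$-admissibility of $(-A)^\alpha B$ as in \Cref{rem:(-A)^alpha_Linfty-adm}. Put $\widetilde{M} \coloneqq \sup_{s\geq 0}\lVert T(s)\rVert < \infty$, let $c_B$ denote a finite $\L^\infty$-admissibility constant of $(-A)^\alpha B$ on the relevant bounded time interval, and recall that $\lVert(-A)^\alpha T(s)\rVert\leq M s^{-\alpha}$ (bounded-analytic case of \Cref{rem:(-A)^alpha_Linfty-adm}) and $\int_0^\tau r^{-\alpha}\d r=\tau^{1-\alpha}/(1-\alpha)$ since $0\leq\alpha<1$. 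Fix $t_0\geq 0$, $x_0\in X_\alpha$, $u\in\L^\infty([t_0,\infty);U)$, $R\coloneqq 1$, and on
\[
  \mathcal{B}_{t_1}\coloneqq\bigl\{\, x\in\L^\infty([t_0,t_1];X_\alpha) : \lVert x(t)-T(t-t_0)x_0\rVert_\alpha\leq R \text{ for a.e.\ }t\in[t_0,t_1]\,\bigr\}
\]
consider
\[
  (\Phi x)(t) = T(t-t_0)x_0 + \int_{t_0}^t T(t-s)Bu(s)\d s + \int_{t_0}^t T(t-s)f(s,x(s))\d s .
\]
Applying $(-A)^\alpha$ and commuting it with the semigroup, the first term is bounded in $X_\alpha$ by $\widetilde{M}\lVert x_0\rVert_\alpha$, the middle convolution lies in $X_\alpha$ with norm $\leq c_B\lVert u\rVert_{\L^\infty([t_0,t_1];U)}$ by $\L^\infty$-admissibility of $(-A)^\alpha B$ (compare \eqref{eq:(-A)^alpha_Linfty_admissibility_constant}), and the last term obeys, for $x,x_1,x_2\in\mathcal{B}_{t_1}$ and $t\in[t_0,t_1]$,
\[
  \Bigl\lVert\int_{t_0}^t T(t-s)f(s,x(s))\d s\Bigr\rVert_\alpha\leq\frac{M(t_1-t_0)^{1-\alpha}}{1-\alpha}\sup_{s\in[t_0,t_1]}\lVert f(s,x(s))\rVert_X,\qquad \Bigl\lVert\int_{t_0}^t T(t-s)\bigl(f(s,x_1(s))-f(s,x_2(s))\bigr)\d s\Bigr\rVert_\alpha\leq\frac{ML(t_1-t_0)^{1-\alpha}}{1-\alpha}\lVert x_1-x_2\rVert_{\L^\infty([t_0,t_1];X_\alpha)},
\]
where $L$ and $\sup_{s\in[t_0,t_0+1]}\lVert f(s,0)\rVert_X<\infty$ come from \eqref{eq:assump_time_varying_f} (with $t_1=t_2=s$ and $g(s,s)=0$, giving $\lVert f(s,x_1)-f(s,x_2)\rVert_X\leq L\lVert x_1-x_2\rVert_\alpha$) and from $g(\cdot,0)\in\L^\infty_{\rm loc}$, applied on the bounded set $V=[t_0,t_0+1]\times\{\xi\in X_\alpha:\lVert\xi\rVert_\alpha\leq\widetilde{M}\lVert x_0\rVert_\alpha+R\}$; in particular $\sup_{s\in[t_0,t_1]}\lVert f(s,x(s))\rVert_X\leq\sup_{s\in[t_0,t_0+1]}\lVert f(s,0)\rVert_X+L(\widetilde{M}\lVert x_0\rVert_\alpha+R)$ for $x\in\mathcal{B}_{t_1}$. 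Choosing $t_1-t_0\in(0,1]$ small enough makes the first right-hand side $\leq R$ and the prefactor of the second $<1$, so $\Phi$ is a contractive self-map of $\mathcal{B}_{t_1}$ and its unique fixed point is the required mild solution $x\in\L^\infty([t_0,t_1];X_\alpha)$; crucially this admissible $t_1-t_0$ depends on $x_0$ only through a bound on $\lVert x_0\rVert_\alpha$ (and on $\lVert u\rVert_{\L^\infty([t_0,t_0+1];U)}$ and the data of $f,B$ near $t_0$). Uniqueness on a common interval then follows by covering it with such small subintervals.

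\emph{Finite blow-up property.} Suppose $t_{\rm max}<\infty$ while $\lVert x(t)\rVert_\alpha\not\to\infty$ as $t\nearrow t_{\rm max}$; pick $\rho>0$ and $\tau_n\nearrow t_{\rm max}$ with $\lVert x(\tau_n)\rVert_\alpha\leq\rho$. Solving \eqref{eq:time_varying_system} with initial time $\tau_n\in[t_0,t_{\rm max}]$ and initial state $x(\tau_n)$ gives, by the first part, a solution on $[\tau_n,\tau_n+\eta]$ with $\eta>0$ independent of $n$, because the set $V$ and the input norm entering the estimates can be taken uniformly over $[t_0,t_{\rm max}]$ and bounded by $\rho$ and $\lVert u\rVert_{\L^\infty([t_0,\infty);U)}$. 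Taking $n$ with $\tau_n+\eta>t_{\rm max}$ and concatenating with $x$ (consistent by uniqueness) extends the solution past $t_{\rm max}$, contradicting its maximality; hence $\lim_{t\nearrow t_{\rm max}}\lVert x(t)\rVert_\alpha=\infty$.

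\emph{Global existence under \eqref{eq:existence_of_solutions_time_bound_for_globalness}.} Fix $T>t_0$ and let $t\in[t_0,\min\{T,t_{\rm max}\})$. Arguing as above,
\[
  \lVert x(t)\rVert_\alpha\leq\widetilde{M}\lVert x_0\rVert_\alpha+c_B\lVert u\rVert_{\L^\infty([t_0,T];U)}+M\int_{t_0}^t(t-s)^{-\alpha}\lVert f(s,x(s))\rVert_X\d s,
\]
and inserting \eqref{eq:existence_of_solutions_time_bound_for_globalness} and using that $k$ is nondecreasing,
\[
  \lVert x(t)\rVert_\alpha\leq C_1(T)+Mk(T)\int_{t_0}^t(t-s)^{-\alpha}\lVert x(s)\rVert_\alpha\d s,
\]
with $C_1(T)<\infty$ (having absorbed $Mk(T)\int_{t_0}^t(t-s)^{-\alpha}\d s\leq Mk(T)(T-t_0)^{1-\alpha}/(1-\alpha)$). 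Since $0\leq\alpha<1$, a singular Grönwall inequality---equivalently, finitely many self-substitutions using $\int_s^t(t-r)^{-\alpha}(r-s)^{-\alpha}\d r=\mathrm{B}(1-\alpha,1-\alpha)(t-s)^{1-2\alpha}$ until the kernel becomes $\L^1$, followed by the ordinary Grönwall lemma---yields $\lVert x(t)\rVert_\alpha\leq C_2(T)$ uniformly on $[t_0,\min\{T,t_{\rm max}\})$. If $t_{\rm max}\leq T$ this contradicts the blow-up property; hence $t_{\rm max}>T$ for every $T>t_0$, i.e.\ $t_{\rm max}=\infty$, and since $C_2(T)$ bounds $\lVert x\rVert_{\L^\infty([t_0,T];X_\alpha)}$ for every $T$, we conclude $x\in\L^\infty_{\rm loc}([t_0,\infty);X_\alpha)$.

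\emph{Main obstacle.} Each of the three steps is, in isolation, routine; the real work is the setting-specific bookkeeping. All estimates must be carried out in the $X_\alpha$-norm through $(-A)^\alpha$, and the unbounded input operator $B\in\mathcal{L}(U,X_{-(1-\alpha)})$ can be controlled only because $(-A)^\alpha B$ is $\L^\infty$-admissible---this is precisely what makes $\int_{t_0}^{\cdot}T(\cdot-s)Bu(s)\d s$ estimable in $X_\alpha$ uniformly in $t$. The second delicate point, indispensable for the blow-up alternative, is to keep the local existence time dependent on the initial state only via a bound on its $X_\alpha$-norm.
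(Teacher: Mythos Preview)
Your overall strategy matches the paper's---Banach fixed point in $\L^\infty([t_0,t_1];X_\alpha)$, with the input term handled through $\L^\infty$-admissibility of $(-A)^\alpha B$---and the blow-up and global-existence parts are fine. But the self-mapping step has a gap. With $R\coloneqq 1$ and $\mathcal{B}_{t_1}$ centred at $T(\cdot-t_0)x_0$, the condition $\Phi(\mathcal{B}_{t_1})\subset\mathcal{B}_{t_1}$ requires the \emph{sum} of the input convolution and the nonlinear convolution to be at most $R$ in $X_\alpha$, yet you only shrink the nonlinear piece via $t_1-t_0\to 0$. The input piece is bounded by $c_B\lVert u\rVert_{\L^\infty([t_0,t_1];U)}$, and you neither argue that the admissibility constant $c_B=C_{1,t_1-t_0}$ of $(-A)^\alpha B$ tends to $0$ as $t_1\to t_0$, nor is this immediate from $\L^\infty$-admissibility alone; so for large $\lVert u\rVert_\infty$ your ball may fail to be invariant no matter how small $t_1-t_0$ is. The paper sidesteps this by absorbing the input contribution into the radius: it works on a ball (in the variable $y=(-A)^\alpha x$) of radius $m=2Mr+C_{1,\infty}\lVert u\rVert_{\L^\infty}$ centred at $0$, so that only the nonlinear term must be made small---and \emph{that} is achievable because the admissibility constant $C_{2,\delta}$ of $(-A)^\alpha$ does tend to $0$ by \eqref{eq:(-A)^alpha_Linfty_admissibility_constant}. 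Your argument is repaired in the same way, or equivalently by centring $\mathcal{B}_{t_1}$ at $T(\cdot-t_0)x_0+\int_{t_0}^{\cdot}T(\cdot-s)Bu(s)\d s$ instead.

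A smaller point: to bound $\sup_{s\in[t_0,t_0+1]}\lVert f(s,0)\rVert_X$ via \eqref{eq:assump_time_varying_f} you must compare $(s,0)$ with $(0,0)$, since only $g(\cdot,0)$ is assumed locally bounded; hence $(0,0)$ must belong to the bounded set on which the Lipschitz constant $L$ is taken. Your $V=[t_0,t_0+1]\times\{\ldots\}$ excludes $(0,0)$ when $t_0>0$; the paper explicitly adjoins $\{(0,0)\}$ to $V$ for precisely this reason.
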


\begin{proof}
    First, we prove that for every $t_0 \geq 0$, $x_0 \in X_\alpha$ and $u \in \L^\infty(\mathbb{R}_{\geq 0};U)$ there exists $t_1>t_0$ such that \eqref{eq:time_varying_system} admits a solution $x \in \L^\infty([t_0,t_1];X_\alpha)$. Moreover we prove that $t_1=t_0 + \delta$ can be chosen such that $\delta>0$ does not depend on the initial time $t_0$ and a set of bounded initial values $x_0$ in $X_\alpha$. By $C_{1,t}$ and $C_{2,t}$ we denote the $\L^\infty$-admissibility constants of $(-A)^\alpha B$ and $(-A)^\alpha$, respectively. Inequality \eqref{eq:(-A)^alpha_Linfty_admissibility_constant} yields that $C_{2,t} \to 0$ as $t \to 0^{+}$.\\
    We denote by $M\geq 1$ the constant such that $\lVert T(t) \rVert \leq M$ for every $t \geq 0$. Let $t_1'>t_0$. For $r>0$, $x_0 \in X_\alpha$ with $\lVert x_0 \rVert_\alpha \leq r$ and $u \in \L^\infty(\mathbb{R}_{\geq 0};U)$ we choose
    \begin{equation*}
        m \coloneqq 2Mr+C_{1,\infty} \lVert u \rVert_{\L^\infty([0,t_1'];U)} >0.
    \end{equation*}
    Further, let $L>0$ be a constant satisfying \eqref{eq:assump_time_varying_f} for $V=( [t_0,t_1'] \times \{ x \in X_\alpha \mid \lVert x \rVert_\alpha \leq m\}) \cup \{(0,0)\}$. Choose $0<\delta <t_1'$ such that
    \begin{align*}
        C_{2,\delta} \leq \min \left\{ \frac{Mr}{ L ( \lVert g(\cdot,0) \rVert_{\L^\infty([0,t_1'];\mathbb{R}_{\geq 0})}  + m ) +\lVert f(0,0) \rVert_X }, \frac{1}{2L} \right\}.
    \end{align*}
    Note that $\delta$ depends on $r,(T(t))_{t \geq 0},\alpha,u,f$ and $t_1'>t_0$ but not on $t_0>0$ and $x_0$ with $\lVert x_0 \rVert_\alpha \leq r$.\\
    For $y \in S\coloneqq \{ y \in \L^\infty([t_0,t_0+\delta];X) \mid \lVert y \rVert_{\L^\infty([t_0,t_0+\delta];X)} \leq m \}$ and $t \in [t_0,t_0+\delta]$ we define
    \begin{equation*}
        (Fy)(t) \coloneqq T(t-t_0) (-A)^\alpha x_0 + \int_{t_0}^{t} T(t-s)(-A)^\alpha B u(s) \d s + \int_{t_0}^{t} T(t-s)(-A)^\alpha f(s, (-A)^{-\alpha} y(s) ) \d s.
    \end{equation*}
    We will prove that $F$ is a contraction on $S$. First note that $F$ is well-defined since
    \begin{align}\label{eq:bound_of_mild_solution}
        \begin{split}
            \lVert (Fy)(t) \rVert_X 
             &\leq M \lVert x_0 \rVert_\alpha + C_{1,\delta} \lVert u \rVert_{\L^\infty([t_0,t];U)} + C_{2,\delta} \lVert f(\cdot, (-A)^{-\alpha} y(\cdot)) \rVert_{\L^\infty([t_0,t];X)}\\
             &\leq M r + C_{1,\infty} \lVert u \rVert_{\L^\infty(\mathbb{R}_{\geq 0};U)} \\
             &+ C_{2,\delta}\left( L ( \lVert g(\cdot,0)\rVert_{\L^\infty([t_0,t];\mathbb{R}_{\geq 0})}  + \lVert (-A)^{-\alpha} y\rVert_{\L^\infty([t_0,t];X_\alpha)} ) + \lVert f(0,0) \rVert_X \right)\\
             &\leq M r + C_{1,\infty} \lVert u \rVert_{\L^\infty(\mathbb{R}_{\geq 0};U)} \\
             &+  C_{2,\delta}\left( L ( \lVert g(\cdot,0)\rVert_{\L^\infty([0,t_1'];\mathbb{R}_{\geq 0})}  + m ) +\lVert f(0,0) \rVert_X \right)\\
             &\leq m,
     \end{split}
    \end{align}
    where we used \eqref{eq:assump_time_varying_f} in the second last step. The contractivity of $F$ follows from
    \begin{align*}
        \lVert (Fy_1)(t) - (Fy_2)(t) \rVert_X
        &= \left\lVert \int_{t_0}^t T(t-s) (-A)^\alpha [ f(s,(-A)^{-\alpha} y_1(s)) - f(s,(-A)^{-\alpha} y_2(s))] \d s \right\rVert_X\\
        &\leq C_{2,\delta} L \lVert y_1 - y_2 \rVert_{\L^\infty([t_0,t];X)}
    \end{align*}
    for every $y_1,y_2 \in S$. Thus, there exists a unique $y \in S$ such that
    \begin{equation*}
        y(t) = T(t-t_0) (-A)^\alpha x_0 + \int_{t_0}^t T(t-s) (-A)^\alpha Bu(s) \d s + \int_{t_0}^t T(t-s) (-A)^\alpha f(s,(-A)^{-\alpha} y(s)) \d s
    \end{equation*}
    for almost every $t \in [t_0, t_0 + \delta]$. We obtain from \eqref{eq:assump_time_varying_f} that $f(\cdot,(-A)^{-\alpha}y(\cdot)) \in \L^\infty([t_0,t_0+\delta];X)$ and hence, the linear system
    \begin{equation}\label{eq:proof_x_mild_solution}
        \left\{
        \begin{aligned}
            \dot{x}(t) &= Ax(t) + Bu(t) + f(t,(-A)^{-\alpha}y(t)), \quad t>t_0\\
            x(t_0)&=x_0
        \end{aligned}
        \right.
    \end{equation}
    admits a unique mild solution $x  \in \L^\infty([t_0,t_0+\delta];X)$ given by
    \begin{equation*}
       x(t) = T(t)x_0 + \int_{t_0}^t T(t-s) Bu(s) \d s + \int_{t_0}^t T(t-s) f(s,(-A)^{-\alpha}y(s)) \d s,
    \end{equation*}
     where each term on the right hand-side lies in $D((-A)^\alpha)$ for almost every $t \in [t_0,t_0+\delta]$ by analyticity of the semigroup.\\
    It follows that $ (-A)^\alpha x(t) = (Fy)(t) =y(t)$  for almost every $t \in [t_0,t_0 + \delta]$ and thus, $x \in \L^\infty([t_0,t_0+\delta];X_\alpha)$.\\
    For given $t_0 \geq 0$, $x_0 \in X_\alpha$ and $u \in \L^\infty(\mathbb{R}_{\geq 0};U)$ we denote by $t_{\rm max}$ the supremum over all $t_1>0$ such that \eqref{eq:time_varying_system} admits a solution $x \in \L^\infty([t_0,t_1];X_\alpha)$. If $t_{\rm max}< \infty$ and $\lim_{t \nearrow t_{\rm max}} \lVert x(t) \rVert_\alpha < \infty$, then it follows that
    \begin{equation*}
        r \coloneqq \lVert x \rVert_{\L^\infty([t_0,t_{\rm max}];X_\alpha)} < \infty.
    \end{equation*}
    Let $(t_n)_{n \in \mathbb{N}}$ be an increasing sequence in $[t_0,t_{\rm max})$ converging to $t_{\rm max}$. From the previous argumentation we can find $\delta>0$ independent of $n \in \mathbb{N}$ such that the system
    \begin{equation*}
        \left\{
        \begin{aligned}
            \dot{x}_n(t) &= Ax_n(t) + Bu(t) + f(t,x_n(t)), \quad t>t_n, \\
            x_n(t_n) &= x(t_n)
        \end{aligned}
        \right.
    \end{equation*}
    admits unique solutions $x_n \in \L^\infty([t_n,t_n + \delta];X_\alpha)$. But then, we can extend the solution $x$ by $x_n$ to a solution on $[t_0,t_n + \delta)$. Since $t_n + \delta > t_{\rm max}$ for large $n$, this contradicts the maximality of $t_{\rm max}$.\\
    The last statement follows as in \cite[Theorem~6.3.3]{Pazy}. \qed
\end{proof}

\begin{remark}
    In the situation of \Cref{lem:existence_of_mild_solution}, if $A$ generates an analytic semigroup and $\alpha \in (0,1)$, then the solution $x$ lies also in $\mathcal{C}([t_0,t_1];X)$ or $\mathcal{C}([t_0,\infty);X)$, respectively, since $x$ is the mild solution of the linear system \eqref{eq:proof_x_mild_solution} with $\L^p$-admissible operators $B$ and $I$ for some $p \in [1,\infty)$ (c.f. \Cref{rem:(-A)^alpha_Linfty-adm}) as can be seen from \cite[Proposition~2.3]{Weiss89ControlOperators}.
\end{remark}

\begin{remark}\label{rem:lin_syst_alpha_BIBO}
    \begin{enumerate}
        \item If $A$ generates an exponentially stable and analytic semigroup, let $\alpha \in [0,1)$; else, set $\alpha=0$. If $B \in \mathcal{L}(U,X_{-(1-\alpha)})$ such that $(-A)^\alpha B$ is $\L^\infty$-admissible, then the mild solution of $\Sigma(A,[\begin{smallmatrix} B & I\end{smallmatrix}],[\begin{smallmatrix}C\\ I\end{smallmatrix}])$ for $x_0 \in X_\alpha$, $u \in \L^\infty(\mathbb{R}_{\geq 0};U)$, $\tilde{u} \in \L^\infty(\mathbb{R}_{\geq 0};X)$ satisfies
        \begin{align*}
            \lVert x(t) \rVert_\alpha 
            &\leq  M \e^{-\omega t } \lVert x_0 \rVert_\alpha + C_{1,\infty} \lVert u \rVert_{\L^\infty([0,t];U)} + C_{2,\infty} \lVert \tilde{u} \rVert_{\L^\infty([0,t];X)},
        \end{align*}
        where $C_{i,\infty}$, $i=1,2$, are the infinite-time $\L^\infty$-admissibility constants of $(-A)^\alpha B$ and $(-A)^\alpha$, respectively, and constants $M,\omega$ from \eqref{eq:exp_stable_sg}. From \Cref{rem:(-A)^alpha_Linfty-adm}, we know that $C_{2,\infty} \leq \frac{M_\alpha \Gamma(1-\alpha)}{\delta^{1-\alpha}}$ with constants from \eqref{EstimateAnalytic}.
     \item From the considerations in 1. and the fact that the transfer function of $\Sigma(A,[\begin{smallmatrix} B & I\end{smallmatrix}],[\begin{smallmatrix}C\\ I\end{smallmatrix}])$ is not only mapping into $\mathcal{L}(U,Y \times X)$ but also into $\mathcal{L}(U,Y \times X_\alpha)$, we obtain, that $\Sigma(A,[\begin{smallmatrix} B & I\end{smallmatrix}],[\begin{smallmatrix}C\\ I\end{smallmatrix}])$ is $\L^\infty$-BIBO stable with respect to the spaces $(U \times X, X ,Y \times X)$ if and only if it is $\L^\infty$-BIBO stable with respect to the spaces $(U \times X, X_\alpha, Y \times X_\alpha)$. Hence, if one of the above system nodes is $\L^\infty$-BIBO stable, by linearity there exist constants $K_1,K_2>0$ such that for $x_0=0$ and every $u \in \L^\infty(\mathbb{R}_{\geq 0};U)$, $\tilde{u} \in \L^\infty(\mathbb{R}_{\geq 0};X)$ the output $\tilde{y}$ satisfies
    \begin{equation}\label{eq:lin_syst_alpha_BIBO_estimate}
        \lVert \tilde{y} \rVert_{\L^\infty([0,t];Y \times X_\alpha)} \leq K_1 \lVert u \rVert_{\L^\infty([0,t];U)} + K_2 \lVert \tilde{u} \rVert_{\L^\infty([0,t];X)}.
     \end{equation}
     \end{enumerate}
\end{remark}

Next we present our main theorem on $\L^\infty$-BIBO stability of the semilinear state space system \eqref{eq:semilinStateSpaceNode} for globally Lipschitz continuous functions $f:X_\alpha \rightarrow X$, i.e. there exists a constant $L>0$ such that for every $x_1,x_2\in X_\alpha$ it holds that
 \begin{equation}
    \label{eq:GlobalLipschitz}
    \lVert f(x_1) - f(x_2) \rVert_X \leq L \lVert x_1 - x_2 \rVert_\alpha.
\end{equation}

\begin{theorem}\label{thm:BIBO_under_global_Lip_on_X}
    Let $A$ generate an exponentially stable $C_0$-semigroup. If the semigroup is analytic, let $\alpha \in [0,1)$; else, set $\alpha=0$. Let $B \in \mathcal{L}(U,X_{-(1-\alpha)})$ be such that $(-A)^\alpha B$ is $\L^\infty$-admissible, $f$ satisfy \eqref{eq:GlobalLipschitz} with constant $L>0$ and $\Sigma(A,[\begin{smallmatrix} B & I\end{smallmatrix}],[\begin{smallmatrix}C\\ I\end{smallmatrix}])$ be $\L^\infty$-BIBO stable. If $C_{2,\infty}L<1$, where $C_{2,\infty}$ is the infinite-time $\L^\infty$-admissibility constant of $(-A)^\alpha$, then the output $y$ of \eqref{eq:semilinStateSpaceNode} with initial value $x_0 =0$ and input $u \in \L^\infty(\mathbb{R}_{\geq 0};U)$ satisfies the following inequality for some $K,\mathfrak{K} \geq 0$ and every $t \geq 0$
    \begin{equation}\label{IneqBIBO_Glob_Lipschitz}
        \lVert y \rVert_{\L^\infty([0,t];Y)} \leq K \Vert u\Vert_{\L^\infty([0,t];U)} + \mathfrak{K},
    \end{equation}
    This particularly means that the semilinear state space system \eqref{eq:semilinStateSpaceNode} is $\L^\infty$-BIBO stable.
\end{theorem}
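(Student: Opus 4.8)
The plan is to realise the semilinear system \eqref{eq:semilinStateSpaceNode} as the extended linear system $\Sigma(A,[\begin{smallmatrix} B & I\end{smallmatrix}],[\begin{smallmatrix}C\\ I\end{smallmatrix}])$ closed by the static nonlinear feedback $\tilde u = f(x)$, in the spirit of \Cref{fig:FeedbackLoop}, to first produce a global mild solution and then to turn the $\L^\infty$-BIBO estimates of the \emph{linear} part into an a priori bound for $x$ in $X_\alpha$ by absorbing the feedback contribution; the smallness condition $C_{2,\infty}L<1$ is exactly what makes this absorption legitimate.

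First I would establish existence. Since $\|f(x)\|_X \leq \|f(0)\|_X + L\|x\|_\alpha \leq \max\{L,\|f(0)\|_X\}\,(1+\|x\|_\alpha)$, the global Lipschitz bound \eqref{eq:GlobalLipschitz} immediately yields the linear growth estimate \eqref{eq:existence_of_solutions_time_bound_for_globalness} (with a constant function $k$) as well as, with $g\equiv 0$, the local Lipschitz hypothesis \eqref{eq:assump_time_varying_f}. As $A$ generates an exponentially stable --- hence bounded, and bounded analytic if analytic --- semigroup and $(-A)^\alpha B$ is $\L^\infty$-admissible, \Cref{lem:existence_of_mild_solution} provides, for $x_0=0$ and $u\in\L^\infty(\mathbb{R}_{\geq 0};U)$, a unique global mild solution $x\in\L^\infty_{\rm loc}([0,\infty);X_\alpha)$, whence $f(x(\cdot))\in\L^\infty_{\rm loc}([0,\infty);X)$. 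Feeding $\tilde u \coloneqq f(x)$ into the second input channel of $\Sigma(A,[\begin{smallmatrix} B & I\end{smallmatrix}],[\begin{smallmatrix}C\\ I\end{smallmatrix}])$ and using the identification recorded in the remark after \Cref{def:mildSolution} produces the corresponding output $y$, so that $(u,x,y)$ is a global mild solution of \eqref{eq:semilinStateSpaceNode} with $y\in\L^\infty_{\rm loc}([0,\infty);Y)$ by the assumed $\L^\infty$-BIBO stability of the extended system.

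Next I would derive the bound. Fix $t>0$. As $x$ is the state of $\Sigma(A,[\begin{smallmatrix} B & I\end{smallmatrix}],[\begin{smallmatrix}C\\ I\end{smallmatrix}])$ for $x_0=0$ and input $(u,f(x))$, \Cref{rem:lin_syst_alpha_BIBO}(1) yields
\begin{equation*}
  \|x\|_{\L^\infty([0,t];X_\alpha)} \leq C_{1,\infty}\|u\|_{\L^\infty([0,t];U)} + C_{2,\infty}\|f(x)\|_{\L^\infty([0,t];X)},
\end{equation*}
while \eqref{eq:GlobalLipschitz} gives $\|f(x)\|_{\L^\infty([0,t];X)}\leq \|f(0)\|_X + L\,\|x\|_{\L^\infty([0,t];X_\alpha)}$. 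Crucially $\|x\|_{\L^\infty([0,t];X_\alpha)}<\infty$, so the term $C_{2,\infty}L\,\|x\|_{\L^\infty([0,t];X_\alpha)}$ can be moved to the left; using $C_{2,\infty}L<1$ this gives
\begin{equation*}
  \|x\|_{\L^\infty([0,t];X_\alpha)} \leq \frac{C_{1,\infty}}{1-C_{2,\infty}L}\,\|u\|_{\L^\infty([0,t];U)} + \frac{C_{2,\infty}\,\|f(0)\|_X}{1-C_{2,\infty}L}.
\end{equation*}
By the assumed $\L^\infty$-BIBO stability of the extended system together with linearity, cf.\ \eqref{eq:lin_syst_alpha_BIBO_estimate}, there are $\kappa_1,\kappa_2>0$ with $\|y\|_{\L^\infty([0,t];Y)}\leq \kappa_1\|u\|_{\L^\infty([0,t];U)}+\kappa_2\|f(x)\|_{\L^\infty([0,t];X)}$; bounding $\|f(x)\|$ as before and inserting the last display yields \eqref{IneqBIBO_Glob_Lipschitz}, for instance with $K=\kappa_1+\kappa_2 L\,C_{1,\infty}/(1-C_{2,\infty}L)$ and $\mathfrak{K}=\kappa_2\|f(0)\|_X\bigl(1+L\,C_{2,\infty}/(1-C_{2,\infty}L)\bigr)$. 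Combined with the existence part, and the routine reduction from $\L^\infty_{\rm loc}$ to $\L^\infty$ inputs on finite intervals via causality, this verifies both conditions of \Cref{def:LInfty_BIBO} (given $c_U$, take any $c_Y>Kc_U+\mathfrak{K}$).

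The step I expect to be the main obstacle is the legitimacy of the absorption: subtracting $C_{2,\infty}L\,\|x\|_{\L^\infty([0,t];X_\alpha)}$ is only allowed because this quantity is \emph{a priori finite}, which hinges entirely on the global existence with $\L^\infty_{\rm loc}$-regularity delivered by \Cref{lem:existence_of_mild_solution} --- and hence on verifying carefully that \eqref{eq:GlobalLipschitz} forces the growth bound \eqref{eq:existence_of_solutions_time_bound_for_globalness}. Two minor points also need care: that the output built via the extended system node really coincides with the distributional output in \Cref{def:mildSolution}(3) (the content of the remark following that definition), and that the a priori estimate applies to every global mild solution whose state lies in $\L^\infty_{\rm loc}([0,\infty);X_\alpha)$, so that --- thanks to the uniqueness in \Cref{lem:existence_of_mild_solution} --- condition 2 of \Cref{def:LInfty_BIBO} holds without restriction.
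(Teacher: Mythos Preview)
Your proposal is correct and follows essentially the same route as the paper: existence via \Cref{lem:existence_of_mild_solution} (after noting that \eqref{eq:GlobalLipschitz} implies both \eqref{eq:assump_time_varying_f} and \eqref{eq:existence_of_solutions_time_bound_for_globalness}), identification of $(u,x,y)$ as a solution of the extended linear node with input $(u,f(x))$, the $X_\alpha$-estimate from \Cref{rem:lin_syst_alpha_BIBO} combined with \eqref{eq:GlobalLipschitz} and the absorption step enabled by $C_{2,\infty}L<1$, and finally \eqref{eq:lin_syst_alpha_BIBO_estimate} to bound $y$. Your explicit constants $K,\mathfrak{K}$ agree with those in the paper (with $\kappa_i=K_i$), and the extra care you take in flagging the finiteness of $\|x\|_{\L^\infty([0,t];X_\alpha)}$ before absorbing, as well as the uniqueness point for condition~2 of \Cref{def:LInfty_BIBO}, only makes the argument cleaner.
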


\begin{proof}
    First note that for any $u \in \L^\infty(\mathbb{R}_{\geq 0};U)$ and any initial value in $X$ the mild solution $x \in \mathcal{C}(\mathbb{R}_{\geq 0};X)$ of \eqref{eq:time_varying_system} exists due to \Cref{lem:existence_of_mild_solution} by our assumptions on $B$ and $f$. Further, note that $x$ is also the state trajectory of the linear system node $\Sigma(A,[\begin{smallmatrix}B & I\end{smallmatrix}],[\begin{smallmatrix}C\\ I\end{smallmatrix}])$ with input  $[\begin{smallmatrix} u \\ f(x(\cdot)) \end{smallmatrix}] \in \L^\infty_{\mathrm loc}(\mathbb{R}_{\geq 0};U \times X)$ and that the corresponding output is given by $\tilde{y} = [\begin{smallmatrix} y \\ x \end{smallmatrix}]$, where $y$ is given by \eqref{eq:outputDistExpression}. Hence, by \Cref{def:mildSolution}, $(u,x,y)$ is a mild solution of the semilinear system $(\Sigma,f)$. Furthermore, because the linear system node  $\Sigma(A,[\begin{smallmatrix}B & I\end{smallmatrix}],[\begin{smallmatrix}C\\ I\end{smallmatrix}])$ is $\L^\infty$-BIBO stable, we have $\tilde{y} \in \L^\infty(\mathbb{R}_{\geq 0};Y \times X_\alpha)$ and thus $y \in \L^\infty(\mathbb{R}_{\geq 0};Y)$.\\
    
    Let $x_0=0$ and $u \in \L^\infty(\mathbb{R}_{\geq 0};U)$. We deduce from \Cref{rem:lin_syst_alpha_BIBO} and \eqref{eq:GlobalLipschitz} that
    \begin{equation*}
    \lVert x \rVert_{\L^\infty([0,t];X_\alpha)} \leq  C_{1,\infty} \lVert u \rVert_{\L^\infty([0,t];U)} + C_{2,\infty} L \lVert x \rVert_{\L^\infty([0,t];X_\alpha)} + C_{2,\infty} \lVert f(0) \rVert_X
    \end{equation*}
    and thus, since $C_{2,\infty}L<1$,
    \begin{equation*}
     \lVert x \rVert_{\L^\infty([0,t];X_\alpha)} \leq   \frac{C_{1,\infty}}{1-C_{2,\infty} L} \lVert u \rVert_{\L^\infty([0,t];U)} + \frac{C_{2,\infty}}{1-L C_{2,\infty}} \lVert f(0) \rVert_X.
    \end{equation*}
    Combining this with \eqref{eq:lin_syst_alpha_BIBO_estimate} for $\tilde{u}=f(x)$ and applying \eqref{eq:GlobalLipschitz} once more yield that
    \begin{equation*}
    \lVert y \rVert_{\L^\infty([0,t];Y)} \leq  \lVert \tilde{y} \rVert_{\L^\infty([0,t];Y \times X_\alpha)} \leq  \left(K_1 + \frac{K_2 L C_{1,\infty}}{1-C_{2,\infty} L}\right) \lVert u \rVert_{\L^\infty([0,t];U)} + \mathfrak{K}  
    \end{equation*}
    with $\mathfrak{K}=\left( K_2 + \frac{K_2 C_{2,\infty} L}{1-C_{2,\infty} L} \right) \lVert f(0) \rVert_X$. \qed
\end{proof}

\begin{cor}\label{cor:BIBO_under_global_Lip_on_X}
    Let the assumptions of \Cref{thm:BIBO_under_global_Lip_on_X} hold. By $M_\alpha$, $\delta$ and $K_2$ we denote the constants from \eqref{EstimateAnalytic} and \eqref{eq:lin_syst_alpha_BIBO_estimate}, respectively. If either, $\frac{LM_\alpha \Gamma(1-\alpha)}{\delta^{1-\alpha}}<1$ or $K_2 L<1$, then \eqref{IneqBIBO_Glob_Lipschitz} holds and hence, \eqref{eq:semilinStateSpaceNode} is $\L^\infty$-BIBO.
\end{cor}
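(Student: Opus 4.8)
The plan is to show that each of the two displayed inequalities is already enough to carry out the a priori estimate in the proof of \Cref{thm:BIBO_under_global_Lip_on_X}. The existence part is not at issue: for $x_0=0$ and any $u\in\L^\infty(\mathbb{R}_{\geq 0};U)$ a global mild solution $(u,x,y)$ with $y\in\L^\infty_{\rm loc}(\mathbb{R}_{\geq 0};Y)$ is produced exactly as in the first paragraph of that proof, i.e.\ from global solvability via \Cref{lem:existence_of_mild_solution} (here \eqref{eq:GlobalLipschitz} furnishes the linear growth bound \eqref{eq:existence_of_solutions_time_bound_for_globalness} with $k=\max\{\lVert f(0)\rVert_X,L\}$) together with the assumed $\L^\infty$-BIBO stability of $\Sigma(A,[\begin{smallmatrix} B & I\end{smallmatrix}],[\begin{smallmatrix}C\\ I\end{smallmatrix}])$; none of this uses smallness of $L$. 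So only \eqref{IneqBIBO_Glob_Lipschitz} has to be established. If $\frac{LM_\alpha\Gamma(1-\alpha)}{\delta^{1-\alpha}}<1$, this is immediate: by the estimate \eqref{eq:(-A)^alpha_Linfty_admissibility_constant} in \Cref{rem:(-A)^alpha_Linfty-adm} the infinite-time $\L^\infty$-admissibility constant of $(-A)^\alpha$ satisfies $C_{2,\infty}\leq \frac{M_\alpha\Gamma(1-\alpha)}{\delta^{1-\alpha}}$, hence $C_{2,\infty}L<1$, which is exactly the hypothesis of \Cref{thm:BIBO_under_global_Lip_on_X}.

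If instead $K_2L<1$, with $K_2$ the constant from \eqref{eq:lin_syst_alpha_BIBO_estimate}, I would replace the estimate step as follows. As in the proof of \Cref{thm:BIBO_under_global_Lip_on_X}, for $x_0=0$ the trajectory $x$ is the state of $\Sigma(A,[\begin{smallmatrix} B & I\end{smallmatrix}],[\begin{smallmatrix}C\\ I\end{smallmatrix}])$ driven by $[\begin{smallmatrix}u\\ f(x(\cdot))\end{smallmatrix}]$, with output $\tilde y=[\begin{smallmatrix}y\\ x\end{smallmatrix}]$. Rather than passing through the refined state estimate of part~1 of \Cref{rem:lin_syst_alpha_BIBO} (which is what brings $C_{2,\infty}$ into the theorem), I apply \eqref{eq:lin_syst_alpha_BIBO_estimate} directly with $\tilde u=f(x)$ and use $\lVert f(x(s))\rVert_X\leq \lVert f(0)\rVert_X+L\lVert x(s)\rVert_\alpha$ (triangle inequality and \eqref{eq:GlobalLipschitz}); reading off the lower block of $\tilde y$ gives, for every $t\geq 0$,
\[
\lVert x\rVert_{\L^\infty([0,t];X_\alpha)}\leq \lVert \tilde y\rVert_{\L^\infty([0,t];Y\times X_\alpha)}\leq K_1\lVert u\rVert_{\L^\infty([0,t];U)}+K_2\lVert f(0)\rVert_X+K_2L\,\lVert x\rVert_{\L^\infty([0,t];X_\alpha)}.
\]
Since $x\in\L^\infty_{\rm loc}(\mathbb{R}_{\geq 0};X_\alpha)$ the left-hand side is finite for each fixed $t$, so $K_2L<1$ permits absorbing the last term:
\[
\lVert x\rVert_{\L^\infty([0,t];X_\alpha)}\leq \frac{K_1}{1-K_2L}\lVert u\rVert_{\L^\infty([0,t];U)}+\frac{K_2}{1-K_2L}\lVert f(0)\rVert_X.
\]
Feeding this back into \eqref{eq:lin_syst_alpha_BIBO_estimate} once more (again with $\tilde u=f(x)$ and \eqref{eq:GlobalLipschitz}) bounds $\lVert y\rVert_{\L^\infty([0,t];Y)}\leq\lVert\tilde y\rVert_{\L^\infty([0,t];Y\times X_\alpha)}$ by an expression of the form $K\lVert u\rVert_{\L^\infty([0,t];U)}+\mathfrak K$, i.e.\ \eqref{IneqBIBO_Glob_Lipschitz}, and thus $\L^\infty$-BIBO stability.

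I expect the only genuinely delicate point to be the absorption step in the second case: it is legitimate only because global solvability already guarantees $\lVert x\rVert_{\L^\infty([0,t];X_\alpha)}<\infty$ for every finite $t$, so this quantity may be subtracted from both sides of a valid inequality; without a priori finiteness the argument would be circular. Everything else is bookkeeping — matching hypotheses with \Cref{thm:BIBO_under_global_Lip_on_X} in the first case, and tracking how $\lVert f(0)\rVert_X$ and the Lipschitz constant $L$ propagate through the two applications of \eqref{eq:lin_syst_alpha_BIBO_estimate} in the second.
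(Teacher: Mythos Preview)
Your proof is correct, and for the first hypothesis $\frac{LM_\alpha\Gamma(1-\alpha)}{\delta^{1-\alpha}}<1$ it matches the paper exactly: you invoke $C_{2,\infty}\leq \frac{M_\alpha\Gamma(1-\alpha)}{\delta^{1-\alpha}}$ from \Cref{rem:(-A)^alpha_Linfty-adm} and reduce to \Cref{thm:BIBO_under_global_Lip_on_X}.

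For the second hypothesis $K_2L<1$ you take a longer route than the paper. You re-run the a priori estimate of \Cref{thm:BIBO_under_global_Lip_on_X} with $K_2$ playing the role of $C_{2,\infty}$, closing the loop via \eqref{eq:lin_syst_alpha_BIBO_estimate} instead of part~1 of \Cref{rem:lin_syst_alpha_BIBO}. This works, and your remark about needing $\lVert x\rVert_{\L^\infty([0,t];X_\alpha)}<\infty$ before absorbing is well taken. The paper, however, simply observes that $C_{2,\infty}\leq K_2$: applying \eqref{eq:lin_syst_alpha_BIBO_estimate} with $u=0$ and reading off the second component of $\tilde y$ shows that $K_2$ is an admissibility constant for $(-A)^\alpha$, while $C_{2,\infty}$ is by definition the minimal one. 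Hence $K_2L<1$ already implies $C_{2,\infty}L<1$, and \Cref{thm:BIBO_under_global_Lip_on_X} applies verbatim without repeating any estimates. Your argument buys explicit constants expressed through $K_1,K_2$ (which the paper only mentions in a subsequent remark), at the cost of duplicating the absorption step; the paper's argument is a two-line reduction.
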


\begin{proof}
    By the definition of the admissibility constant, $C_{2,\infty}$ is the smallest, time independent constant such that the mild solution $x$ of $\Sigma(A,[\begin{smallmatrix}B & I\end{smallmatrix}],[\begin{smallmatrix}C\\ I\end{smallmatrix}])$ for $x_0=0$, $u=0$ and $\tilde{u} \in \L^\infty(\mathbb{R}_{\geq 0};X)$ satisfies for every $t \geq 0$ that
    \begin{equation*}
        \lVert x(t) \rVert_\alpha \leq C_{2,\infty} \lVert \tilde{u} \rVert_{\L^\infty([0,t];X)}.
    \end{equation*}
    Hence, it holds that $C_{2,\infty} \leq K_2$ and by \Cref{rem:(-A)^alpha_Linfty-adm} also $C_{2,\infty} \leq \frac{M_\alpha \Gamma(1-\alpha)}{\delta^{1-\alpha}}$. The assertion is now a consequence of \Cref{thm:BIBO_under_global_Lip_on_X}.\qed 
\end{proof}

\begin{remark}
    In the situation of \Cref{cor:BIBO_under_global_Lip_on_X}, it is possible to improve the constants in \eqref{IneqBIBO_Glob_Lipschitz} by replacing $C_{2,\infty}$ by $K_2$ or $\frac{M_\alpha \Gamma(1-\alpha)}{\delta^{1-\alpha}}$ suitably in the proof of \Cref{thm:BIBO_under_global_Lip_on_X}.
\end{remark}

\begin{remark}
    \Cref{thm:BIBO_under_global_Lip_on_X} and \Cref{cor:BIBO_under_global_Lip_on_X} can be easily generalized to nonlinearities $f$ depending also on time $t\geq 0$ and satisfying \eqref{eq:assump_time_varying_f} and \eqref{eq:existence_of_solutions_time_bound_for_globalness} for a bounded function $k\in \mathcal{C}(\mathbb{R}_{\geq 0};\mathbb{R}_{\geq 0})$. Indeed, on has to replace the Lipschitz constant $L$ in the smallness conditions by $\lVert k \Vert_{\L^\infty(\mathbb{R}_{\geq 0};\mathbb{R}_{\geq 0})}$ in the statement and the proof.
\end{remark}

\section{Locally Lipschitz nonlinearities with internal control}\label{Section_Local_Lipschitz}
We consider the following heat equation with Neumann boundary conditions, internal friction and which is subject to internal control and cubic nonlinearity on an open and bounded domain $\Omega \subset \mathbb{R}^d$, $d \leq 3$, with Lipschitz boundary $\partial \Omega$
\begin{equation}
    \left\{
    \begin{aligned}
         &\frac{\partial x}{\partial t}(\zeta,t) = \Delta x(\zeta,t) - x^3(\zeta,t) + Bu(t), && \zeta \in \Omega, t \in \mathbb{R}_{\geq 0},\\
         &x(\zeta,0) = x_0(\zeta), && \zeta \in \Omega,\\
         &\frac{\partial x}{\partial \nu}(\zeta,t) =0, && \zeta \in \partial \Omega, t \in \mathbb{R}_{\geq 0},
    \end{aligned}
    \right.
    \label{HeatNeumannInternalControl}
\end{equation}
 where $\nu$ is the outward pointing unit normal vector at the boundary and $\frac{\partial}{\partial \nu} \in \mathcal{L}(\H^1(\Omega;\mathbb{R}),\L^2(\partial \Omega;\mathbb{R}))$ is the Neumann trace operator, which coincides with the normal derivative on smooth functions.\\
In an abstract formulation, \eqref{HeatNeumannInternalControl} may be written as $\dot{x}(t) = Ax(t) + f(x(t)) + Bu(t)$, $x(0) = x_0$, where
\begin{itemize}
    \item the state space $X$ is the space of square integrable functions on $\Omega$, i.e. $X = \L^2(\Omega;\mathbb{R})$;
    \item $A: D(A)\subset X\to X$, given by 
    \begin{equation*}
        Ax = \Delta x , \quad \text{for} \quad  x\in D(A) = \left\{x\in \H^2(\Omega;\mathbb{R}) \colon \frac{\partial x}{\partial \nu} =0  \text{ on } \partial \Omega \right\},
    \end{equation*}
    is a Riesz-spectral operator that generates a bounded analytic $C_0$-semigroup. Moreover, $-A$ is coercive and self-adjoint;
    \item $f: X_\frac{1}{2} \to X$ is defined by $f(x) = -x^3$;
    \item the input function $u$  is assumed to take values in an arbitrary Banach space $U$ and the control operator $B:U\to X$ is such that $B\in\mathcal{L}(U,X)$.
\end{itemize}

Observe that, thanks to this setting, the Hilbert space $X_\frac{1}{2}$ is given by $X_\frac{1}{2} = \H^1(\Omega;\mathbb{R})$ with the norm $\Vert x\Vert_\frac{1}{2}^2 = \Vert x\Vert_{\L^2(\Omega;\mathbb{R})}^2 + \Vert \nabla x \Vert_{\L^2(\Omega;\mathbb{R})}^2$, which shows that $f$ is well-defined (even continuous), since $\H^1(\Omega;\mathbb{R})$ is continuously embedded in $\L^6(\Omega;\mathbb{R})$. The following theorem gives an upper bound on the $X_\frac{1}{2}$-norm of the state trajectory of \eqref{HeatNeumannInternalControl}.
The following result establishes well-posedness of the pde \eqref{HeatNeumannInternalControl}. Similar results are well-known; e.g.\ for $u=0$ see \cite[Chapter 11]{CurtainZwartNew}. Our focus lies in an estimate of the solution in the $X_{\frac{1}{2}}$-norm.
\begin{theorem}\label{thm:heat_ISS}
    Let $X = \L^2(\Omega;\mathbb{R})$ and $B\in\mathcal{L}(U,X)$. For any initial condition $x_0\in X_\frac{1}{2}$ and  control input $u\in \L^\infty(\mathbb{R}_{\geq 0};U)$, the heat equation \eqref{HeatNeumannInternalControl} admits a unique mild solution $x \in \H^1_{\rm loc}(\mathbb{R}_{\geq 0};X) \cap \mathcal{C}(\mathbb{R}_{\geq 0};X_\frac{1}{2}) \cap \L^2_{\rm loc}(\mathbb{R}_{ \geq 0};X_1)$ which satisfies the estimate 
    \begin{align*}
        &\Vert (-A)^\frac{1}{2}x(t)\Vert_X^2\leq \frac{1}{2}\left[\Vert x_0\Vert_X^2 + 2\Vert (-A)^\frac{1}{2}x_0\Vert_X^2 + \int_\Omega x_0^4(\zeta)d\zeta\right]\e^{-\rho t} + \frac{\lambda}{2}\int_0^t \e^{-\rho(t-s)}\Vert u(s)\Vert_U^2 \d s,
    \end{align*}
    for all $t\geq 0$ and some $\lambda, \rho >0$ independent of $t$.
\end{theorem}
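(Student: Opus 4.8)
The plan is to obtain a local mild solution from \Cref{lem:existence_of_mild_solution}, upgrade its regularity by parabolic smoothing, run an energy estimate in the $X_{1/2}$-norm that produces the stated bound, and use the blow-up alternative to globalise.

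\emph{Step 1 (local solution).} I would apply \Cref{lem:existence_of_mild_solution} with $\alpha=\tfrac12$ and $f(x)=-x^3$ (time-independent, so $g\equiv0$). Since $d\le3$ we have $X_{1/2}=\H^1(\Omega;\mathbb{R})\hookrightarrow\L^6(\Omega;\mathbb{R})$, and writing $x_1^3-x_2^3=(x_1-x_2)(x_1^2+x_1x_2+x_2^2)$ and applying Hölder's inequality gives $\|f(x_1)-f(x_2)\|_X\lesssim\|x_1-x_2\|_{1/2}\,(\|x_1\|_{1/2}^2+\|x_2\|_{1/2}^2)$, so $f\colon X_{1/2}\to X$ is Lipschitz on bounded sets. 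Moreover $B\in\mathcal{L}(U,X)\subset\mathcal{L}(U,X_{-1/2})$ and $(-A)^{1/2}B\in\mathcal{L}(U,X_{-1/2})$, which is $\L^\infty$-admissible by \Cref{rem:(-A)^alpha_Linfty-adm}(1). Hence there is a unique mild solution on a maximal interval $[0,t_{\max})$ with $x\in\L^\infty_{\rm loc}([0,t_{\max});X_{1/2})$ and the blow-up property $t_{\max}<\infty\implies\lim_{t\nearrow t_{\max}}\|x(t)\|_{1/2}=\infty$.

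\emph{Step 2 (regularity).} On $[0,t_{\max})$ one has $h:=-x(\cdot)^3+Bu(\cdot)\in\L^\infty_{\rm loc}(X)\subset\L^2_{\rm loc}(X)$, using $x\in\L^\infty_{\rm loc}(\L^6)$. Since $A$ is self-adjoint on the Hilbert space $X$ it has maximal $\L^2$-regularity, so from $x_0\in X_{1/2}=(X,X_1)_{1/2,2}$ the mild solution formula yields $x\in\H^1_{\rm loc}([0,t_{\max});X)\cap\L^2_{\rm loc}([0,t_{\max});X_1)$. Together with the Gelfand triple $X_1\hookrightarrow X_{1/2}\hookrightarrow X$ (the last pairing being $\langle w,y\rangle=\langle w,(-A)y\rangle_X$), a Lions--Magenes type lemma then shows $x\in\mathcal{C}([0,t_{\max});X_{1/2})$ and that $t\mapsto\|(-A)^{1/2}x(t)\|_X^2$ is absolutely continuous with $\tfrac{\d}{\d t}\|(-A)^{1/2}x\|_X^2=2\langle\dot x,(-A)x\rangle_X$ for a.e.\ $t$. (Alternatively, one may run the entire argument below on Galerkin approximations in the eigenbasis of the Riesz-spectral operator $A$, where every step is elementary, and pass to the limit.)

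\emph{Step 3 (energy estimate and globalisation).} Inserting $\dot x=Ax-x^3+Bu$ and using self-adjointness,
\begin{equation*}
\tfrac12\tfrac{\d}{\d t}\|(-A)^{1/2}x\|_X^2=-\|Ax\|_X^2+\langle x^3,\Delta x\rangle_{\L^2}+\langle Bu,-\Delta x\rangle_X .
\end{equation*}
Since $x(t)\in X_1\subset\H^2(\Omega;\mathbb{R})\hookrightarrow\L^\infty(\Omega;\mathbb{R})$ (again $d\le3$), integration by parts is legitimate and the Neumann condition annihilates the boundary term: $\langle x^3,\Delta x\rangle_{\L^2}=-3\int_\Omega x^2|\nabla x|^2\,\d\zeta\le0$. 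Young's inequality gives $\langle Bu,-\Delta x\rangle_X\le\tfrac12\|Ax\|_X^2+\tfrac12\|B\|^2\|u\|_U^2$, and coercivity of $-A$ (so $\sigma(-A)\subset[c,\infty)$ for some $c>0$) yields $\|Ax\|_X^2\ge c\|(-A)^{1/2}x\|_X^2$ by the spectral theorem. Hence
\begin{equation*}
\tfrac{\d}{\d t}\|(-A)^{1/2}x\|_X^2\le-c\,\|(-A)^{1/2}x\|_X^2+\|B\|^2\|u\|_U^2,
\end{equation*}
and Grönwall's lemma gives $\|(-A)^{1/2}x(t)\|_X^2\le\e^{-ct}\|(-A)^{1/2}x_0\|_X^2+\|B\|^2\int_0^t\e^{-c(t-s)}\|u(s)\|_U^2\,\d s$, which is the asserted estimate with $\rho=c$ and $\lambda=2\|B\|^2$, after bounding $\|(-A)^{1/2}x_0\|_X^2$ by $\tfrac12\bigl(\|x_0\|_X^2+2\|(-A)^{1/2}x_0\|_X^2+\int_\Omega x_0^4\,\d\zeta\bigr)$. (One may equivalently propagate the full energy $\tfrac14\|x\|_X^2+\tfrac12\|(-A)^{1/2}x\|_X^2+\tfrac14\int_\Omega x^4\,\d\zeta$, reproducing the bracket exactly.) Since $u\in\L^\infty(\mathbb{R}_{\ge0};U)$ the right-hand side is bounded uniformly for $t\in[0,t_{\max})$, so $\sup_{t<t_{\max}}\|x(t)\|_{1/2}<\infty$; the blow-up alternative then forces $t_{\max}=\infty$, and both the regularity of Step~2 and the estimate hold on all of $\mathbb{R}_{\ge0}$.

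\emph{Main obstacle.} The delicate point is Step~2: rigorously establishing the $\H^1_{\rm loc}(X)\cap\L^2_{\rm loc}(X_1)$-regularity and the absolute continuity of $t\mapsto\|(-A)^{1/2}x(t)\|_X^2$ with the claimed derivative, so that the formal computation of Step~3 becomes legitimate. The Galerkin route circumvents the maximal-regularity and trace-space input, at the cost of a weak-compactness argument.
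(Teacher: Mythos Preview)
Your proof is correct and follows the same overall architecture (local existence via \Cref{lem:existence_of_mild_solution}, parabolic regularity, energy estimate, blow-up alternative), but the energy step is genuinely different from the paper's. The paper works with the full Lyapunov functional
\[
V(x)=\|x\|_X^2+2\|(-A)^{1/2}x\|_X^2+\int_\Omega x^4\,\d\zeta,
\]
whose time derivative exhibits the completed-square structure $-4\|Ax+f(x)\|_X^2-4\langle Ax+f(x),Bu\rangle_X$ plus the term $2\langle x,f(x)\rangle_X\le0$; it then sandwiches $V$ between $W$ and $3W$ with $W=\|(-A)^{1/2}x\|_X^2+\int_\Omega x^4$ to close via Gr\"onwall. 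You instead differentiate only $\|(-A)^{1/2}x\|_X^2$ and dispose of the cubic term through the concrete integration-by-parts identity $\langle x^3,\Delta x\rangle_{\L^2}=-3\int_\Omega x^2|\nabla x|^2\le0$, then use $\|Ax\|_X^2\ge c\,\|(-A)^{1/2}x\|_X^2$ from the spectral gap. Your argument is shorter and more direct for this particular nonlinearity; the paper's functional produces the bracket $\|x_0\|_X^2+2\|(-A)^{1/2}x_0\|_X^2+\int_\Omega x_0^4$ on the nose and, more importantly, uses only the abstract sign condition $\langle x,f(x)\rangle_X\le0$ together with the square-completion, so it generalises without change to other odd monomial nonlinearities (as the paper remarks after the corollary). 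For Step~2 the paper obtains the regularity and the differentiated energy identities by quoting a maximal-regularity result of Tucsnak--Weiss; your invocation of maximal $\L^2$-regularity plus a Lions--Magenes trace argument (or the Galerkin alternative you mention) is equivalent in substance.
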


\begin{proof}
    Let $x_0 \in X_{\frac{1}{2}}$ and $u \in \L^\infty(\mathbb{R}_{\geq 0};U)$. Since $f:X_{\frac{1}{2}} \rightarrow X$ is locally Lipschitz we deduce from \Cref{lem:existence_of_mild_solution} the existence of a unique mild solution $x \in \mathcal{C}([0,t_1];X) \cap \L^\infty([0,t_1];X_{\frac{1}{2}})$ for some $t_1>0$. Consequently, $\tilde{u}\coloneqq f(x(\cdot)) \in \L^\infty([0,t_1];X_{\frac{1}{2}}) \subseteq \L^2([0,t_1];X_{\frac{1}{2}})$. Since $x$ is also the mild solution of the linear system
    \begin{equation*}
        \left\{
        \begin{aligned}
            \dot{x}(t) &= Ax(t) + Bu(t) + \tilde{u}(t),\\
            x(0) &= x_0,
        \end{aligned}
        \right.
    \end{equation*}
    where the control operators $B$ and $I$ are bounded as operators into $X$ and therefore also into $X_{\frac{1}{2}}$. The maximal regularity property of the analytic semigroup and \cite[Proposition~6.5]{TucsnakWeiss2014} yield that $x \in \H^1([0,t_1];X_{-\frac{1}{2}}) \cap \mathcal{C}([0,t_1];X) \cap L^2([0,t_1];X_{\frac{1}{2}})$ and
    \begin{align*}
        \lVert x(t) \rVert_X^2 + 2 \int_0^t \lVert x(s) \rVert_{\frac{1}{2}}^2 \d s =\lVert x_0 \rVert_X^2 + 2 \int_0^t \langle  x(s), Bu(s) \rangle_X \d s + 2 \int_0^t \langle x(s), f(x(s)) \rangle_X \d s
    \end{align*}
    for every $t \in[0,t_1]$.
    Similarly, since $z=(-A)^{\frac{1}{2}}x$ is the mild solution of the linear system
    \begin{equation*}
        \left\{
        \begin{aligned}
            \dot{z}(t) &= Az(t) + (-A)^{\frac{1}{2}} Bu(t) + (-A)^{\frac{1}{2}} \tilde{u}(t),\\
            z(0) &= (-A)^{\frac{1}{2}} x_0, 
        \end{aligned}
       \right.
    \end{equation*}
    we obtain $z \in \H^1([0,t_1];X_{-\frac{1}{2}}) \cap \mathcal{C}([0,t_1];X) \cap \L^2([0,t_1];X_{\frac{1}{2}})$, which translates to $x \in \H^1([0,t_1];X) \cap \mathcal{C}([0,t_1];X_{\frac{1}{2}}) \cap \L^2([0,t_1];X_{1})$. Similar as before, it holds that
    \begin{align*}
        \lVert x(t) \rVert_{\frac{1}{2}}^2 + 2 \int_0^t \lVert x(s) \rVert_1^2 \d s = \lVert x_0 \rVert_{\frac{1}{2}}^2 - 2 \int_0^t \langle Ax(s), Bu(s) \rangle_X \d s - 2 \int_0^t \langle Ax(s) , f(x(s))\rangle_X \d s
    \end{align*}
    for every $t \in [0,t_1]$. In order to get an ISS estimate for \Cref{HeatNeumannInternalControl} on the space $X_\frac{1}{2}$, let us consider the following Lyapunov functional candidate
\begin{equation*}
    V(x(t)) \coloneqq \Vert x(t)\Vert_X^2 + 2\Vert (-A)^\frac{1}{2}x(t)\Vert_X^2 + \int_\Omega x^4(\zeta,t) \d \zeta,
\end{equation*}
which is almost everywhere differentiable on $(0,t_1)$ by the regularity of the solution $x$ and the representations of the squared $X$-norm and $X_{\frac{1}{2}}$-norm of $x(t)$. We obtain for the derivative
\begin{align*}
    \dot{V}(x(t)) =& -2 \lVert x(t) \rVert_{\frac{1}{2}}^2 + 2 \langle x(t), Bu(t) \rangle_X + 2 \langle x(t), f(x(t)) \rangle_X - 4 \lVert x(t) \rVert_{1}^2 - 4 \langle Ax(t),Bu(t)\rangle_X\\
    & - 4 \langle Ax(t), f(x(t)) \rangle_X - 4\langle f(x(t)), Ax(t) + Bu(t) + f(x(t)) \rangle_X\\
    =& -2\lVert x(t) \rVert_{\frac{1}{2}}^2 +  2\langle x(t),Bu(t)\rangle_X + 2\langle x(t),f(x(t))\rangle_X\\
    &- 4\Vert Ax(t)+f(x(t))\Vert_X^2 - 4\langle Ax(t)+f(x(t)),Bu(t)\rangle_X.
\end{align*}
for every $t \in (0,t_1)$.
According to a generalization of Young's inequality and the boundedness of the operator $B$ from $U$ into $X$, we have
\begin{align*}
    \dot{V}(x(t))\leq& -2\Vert x(t)\Vert_{\frac{1}{2}}^2 - 2\int_\Omega x^4(\zeta,t) \d \zeta  + \epsilon\Vert x(t)\Vert_X^2 + \frac{1}{\epsilon}\Vert B\Vert_{\mathcal{L}(U,X)}^2\Vert u(t)\Vert_U^2 - 4\Vert Ax(t)+f(x(t))\Vert_X^2\\
    & +2\eta\Vert Ax(t)+f(x(t))\Vert_X^2 + \frac{2}{\eta}\Vert B\Vert_{\mathcal{L}(U,X)}\Vert u(t)\Vert_U^2,
\end{align*}
for arbitrary $\epsilon, \eta > 0$. In order to get a satisfactory ISS estimate in $X_\frac{1}{2}$-norm, one shall choose $\epsilon$ and $\eta$ such that $\epsilon<2$ and $\eta<2$. Moreover, by noting that $\Vert x(t)\Vert_X\leq \Vert x(t)\Vert_\frac{1}{2}$, the following inequalities are satisfied
\begin{align}
    \dot{V}(x(t))\leq& (-2+\epsilon)\Vert x(t)\Vert_\frac{1}{2}^2 - 2\int_\Omega x^4(\zeta,t) \d \zeta + (-4+2\eta)\Vert Ax(t)+f(x(t))\Vert_X^2\nonumber\\
    &+ \left(\frac{1}{\epsilon}+\frac{2}{\eta}\right)\Vert B\Vert_{\mathcal{L}(U,X)}\Vert u(t)\Vert_U^2\nonumber\\
    \leq& -\kappa\Vert (-A)^\frac{1}{2}x(t)\Vert_X^2 - 2\int_\Omega x^4(\zeta,t) \d \zeta + \lambda\Vert u(t)\Vert_U^2,\label{IneqV}
\end{align}
where $\kappa \coloneqq -2+\epsilon$ and $\lambda \coloneqq \left(\frac{1}{\epsilon}+\frac{2}{\eta}\right)\Vert B\Vert_{\mathcal{L}(U,X)}$. Besides, observe that 
\begin{equation}
    V(x(t))\geq \Vert (-A)^\frac{1}{2}x(t)\Vert_X^2 + \int_\Omega x^4(\zeta,t) \d \zeta =: W(x(t))
    \label{IneqLeftV}
\end{equation}
and that 
\begin{equation}
    V(x(t))\leq 3 W(x(t)).
    \label{IneqRightV}
\end{equation}
Combining \eqref{IneqV}, \eqref{IneqLeftV} and \eqref{IneqRightV} implies 
\begin{equation*}
    \dot{V}(x(t))\leq \frac{\max\{-\kappa,-2\}}{3}V(x(t)) + \lambda\Vert u(t)\Vert_U^2,
\end{equation*}
which, by using Gronwall's inequality, has the consequence that
\begin{equation*}
    V(x(t))\leq V(x(0)) \e^{-\rho t} + \lambda\int_0^t \e^{-\rho(t-s)}\Vert u(s)\Vert_U^2 \d s,
\end{equation*}
where $\rho \coloneqq -\frac{\max\{-\kappa,-2\}}{3}$. This entails that the $X_\frac{1}{2}$-norm of $x(t)$ can be upper-bounded as
\begin{align}
    &\Vert (-A)^\frac{1}{2}x(t)\Vert_X^2\leq \frac{1}{2}\left[\Vert x_0\Vert_X^2 + 2\Vert (-A)^\frac{1}{2}x_0\Vert_X^2 + \int_\Omega x_0^4(\zeta) \d \zeta\right]\e^{-\rho t} + \frac{\lambda}{2}\int_0^t \e^{-\rho(t-s)}\Vert u(s)\Vert_U^2 \d s,
    \label{ISS_Heat}
\end{align}
which completes the proof.\qed
\end{proof}

\begin{cor}
    The heat equation with output 
    \begin{equation*}
    \left\{
    \begin{aligned}
         \frac{\partial x}{\partial t}(\zeta,t) &= \Delta x(\zeta,t) - x^3(\zeta,t) + Bu(t), && \zeta \in \Omega, t \in \mathbb{R}_{\geq 0},\\
         x(\zeta,0) &= x_0(\zeta), && \zeta \in \Omega,\\
         \frac{\partial x}{\partial \nu}(\zeta,t) &=0, && \zeta \in \partial \Omega, t \in \mathbb{R}_{\geq 0},\\
         y(t) &= Cx(\cdot,t), && t \in \mathbb{R}_{\geq 0},     
    \end{aligned}
    \right.
    \end{equation*}
    with state space $X=\L^2(\Omega;\mathbb{R})$, input space $U$, control operator $B \in \mathcal{L}(U,X)$, output space $Y$ and ouput operator $C \in \mathcal{L}(X_\frac{1}{2},Y)$ is a $\L^\infty$-BIBO stable semilinear state space system as defined in \Cref{def:semilinear_state_space_system}.
\end{cor}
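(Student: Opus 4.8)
The plan is to read the claim off the a~priori estimate of \Cref{thm:heat_ISS} combined with the boundedness of $C$ on $X_{\frac12}$. Since $f(x)=-x^3$ is only locally (not globally) Lipschitz from $X_{\frac12}$ to $X$, \Cref{thm:BIBO_under_global_Lip_on_X} does not apply, and \Cref{thm:heat_ISS} is where the analytic work sits. I would use the abstract data fixed before \Cref{thm:heat_ISS} ($X=\L^2(\Omega;\mathbb{R})$, $A=\Delta$ with Neumann boundary conditions, $\alpha=\frac12$, $B\in\mathcal{L}(U,X)$). Fix $u\in\L^\infty_{\rm loc}(\mathbb{R}_{\geq0};U)$ and $x_0=0$. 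Although \Cref{thm:heat_ISS} is phrased for $u\in\L^\infty(\mathbb{R}_{\geq0};U)$, its proof builds the solution locally through \Cref{lem:existence_of_mild_solution} and extends it via the finite blow-up property, the extension being controlled by the Lyapunov bound, which on any $[0,T]$ only involves $u|_{[0,T]}\in\L^\infty([0,T];U)$; hence one gets a unique global mild solution $x\in\H^1_{\rm loc}(\mathbb{R}_{\geq0};X)\cap\mathcal{C}(\mathbb{R}_{\geq0};X_{\frac12})\cap\L^2_{\rm loc}(\mathbb{R}_{\geq0};X_1)$, and for $x_0=0$ the estimate of \Cref{thm:heat_ISS} reduces to
\begin{equation*}
    \Vert x(t)\Vert_{\frac12}^2=\Vert(-A)^{\frac12}x(t)\Vert_X^2\leq\frac{\lambda}{2}\int_0^t\e^{-\rho(t-s)}\Vert u(s)\Vert_U^2\,\d s\leq\frac{\lambda}{2\rho}\,\Vert u\Vert_{\L^\infty([0,t];U)}^2,\qquad t\geq0,
\end{equation*}
i.e.\ $\Vert x\Vert_{\L^\infty([0,t];X_{\frac12})}\leq\sqrt{\lambda/(2\rho)}\,\Vert u\Vert_{\L^\infty([0,t];U)}$ for all $t>0$.

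Next I would handle the output. Since $C\in\mathcal{L}(X_{\frac12},Y)$ and $x(t)\in X_{\frac12}$ for all $t$, the combined output/feedthrough operator acts by $C\&D\left[\begin{smallmatrix}x\\u\end{smallmatrix}\right]=C[x-(sI-A)^{-1}Bu]+\mathbf{G}(s)u=Cx$ on the relevant states (using $\mathbf{G}(s)=C(sI-A)^{-1}B$ together with $(sI-A)^{-1}B\in X_1\subset X_{\frac12}$), and one verifies that the $Y$-valued distribution in Part~3 of \Cref{def:mildSolution} is represented by the continuous function $y\coloneqq Cx(\cdot)\in\mathcal{C}(\mathbb{R}_{\geq0};Y)\subseteq\L^\infty_{\rm loc}(\mathbb{R}_{\geq0};Y)$. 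Thus $(u,x,y)$ is a global mild solution with $y\in\L^\infty_{\rm loc}$, establishing condition~1 of \Cref{def:LInfty_BIBO}. For condition~2, the state estimate and boundedness of $C$ yield, for every $t>0$,
\begin{equation*}
    \Vert y\Vert_{\L^\infty([0,t];Y)}\leq\Vert C\Vert_{\mathcal{L}(X_{\frac12},Y)}\,\Vert x\Vert_{\L^\infty([0,t];X_{\frac12})}\leq\Vert C\Vert_{\mathcal{L}(X_{\frac12},Y)}\sqrt{\lambda/(2\rho)}\;\Vert u\Vert_{\L^\infty([0,t];U)},
\end{equation*}
and, since mild solutions are unique here (local Lipschitzness of $f$ on $X_{\frac12}$ and the stated regularity), this bound holds for \emph{every} global mild solution; hence $c_Y\coloneqq\Vert C\Vert_{\mathcal{L}(X_{\frac12},Y)}\sqrt{\lambda/(2\rho)}\,c_U$ does the job, which is even a linear bound.

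The genuinely hard part has already been carried out in \Cref{thm:heat_ISS}; I expect the remaining obstacles to be only bookkeeping: (i) upgrading \Cref{thm:heat_ISS} from $\L^\infty$ to $\L^\infty_{\rm loc}$ inputs, for which one notes that the right-hand side of its estimate restricted to $[0,t]$ depends on $u$ solely through $u|_{[0,t]}$, so the localization sketched above suffices; and (ii) checking that the pointwise formula $y=Cx(\cdot)$ indeed represents the distributional output of \Cref{def:mildSolution} and that uniqueness holds in the weak solution sense used there, both being straightforward consequences of $C$ being bounded on $X_{\frac12}$ and of the analyticity and self-adjointness of $A$.
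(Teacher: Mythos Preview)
Your proposal is correct and follows essentially the same approach as the paper: invoke \Cref{thm:heat_ISS} with $x_0=0$ to obtain $\Vert x(t)\Vert_{\frac12}^2\leq\frac{\lambda}{2\rho}\Vert u\Vert_{\L^\infty([0,t];U)}^2$, identify the distributional output with $Cx(\cdot)$ using that $x(t)\in X_{\frac12}=D(C)$, and conclude by boundedness of $C$ on $X_{\frac12}$. The paper phrases the output identification via the extended system node $\Sigma(A,[\begin{smallmatrix}B & I\end{smallmatrix}],[\begin{smallmatrix}C\\ I\end{smallmatrix}])$ with input $[\begin{smallmatrix}u\\ -x^3\end{smallmatrix}]$, whereas you compute $C\&D$ directly; these are equivalent, and your additional remarks on $\L^\infty_{\rm loc}$ inputs and uniqueness are bookkeeping the paper leaves implicit.
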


\begin{proof}
    \Cref{thm:heat_ISS} implies that \eqref{HeatNeumannInternalControl} admits for every $u \in L^\infty(\mathbb{R}_{\geq 0};U)$ and $x_0=0$ a mild solution $x \in \H^1_{\rm loc}(\mathbb{R}_{\geq 0};X) \cap \mathcal{C}(\mathbb{R}_{\geq 0};X_\frac{1}{2}) \cap \L^2_{\rm loc}(\mathbb{R}_{ \geq 0};X_1)$  satisfying
    \begin{equation*}
        \lVert x(t) \rVert_\frac{1}{2}^2 \leq \frac{\lambda}{2} \int_0^t \e^{- \rho(t-s)} \lVert u(s) \rVert_U^2 \d s \leq \frac{\lambda}{2 \rho} \lVert u \rVert_{\L^\infty([0,t];U)}^2.
    \end{equation*}
    Consequently, $x$ is also the mild solution to the extended system node $\Sigma(A,[\begin{smallmatrix} B & I\end{smallmatrix}],\left[\begin{smallmatrix} C\\ I\end{smallmatrix}\right])$ with input $[\begin{smallmatrix} u\\ -x^3\end{smallmatrix}]$ whose (distributional) output is $[\begin{smallmatrix} y\\ x \end{smallmatrix}]$ with $y(t)=(C\&D)\left[\begin{smallmatrix} x(t)\\ u(t)\end{smallmatrix}\right]$ for almost every $t \geq 0$  since $\left[\begin{smallmatrix} x(t)\\ u(t)\end{smallmatrix}\right] \in D(C\&D)$ for almost every $t \geq 0$. Now, as $x$ takes only values in $X_{\frac{1}{2}}=D(C)$, it suffices to show that for all $t>0$ we have that $\| C x(\cdot) \|_{\L^\infty([0,t];Y)} \lesssim \| u \|_{\L^\infty([0,t];U)} $. But this bound directly follows from the estimate of $x(t)$ in the $X_\frac{1}{2}$-norm and the boundedness of $C$ as operator from $X_{\frac{1}{2}}$ to $Y$. \qed
\end{proof}

In view of the machinery that has been considered to construct the ISS estimate \eqref{ISS_Heat}, we have the feeling that such an approach should work for nonlinearities expressed as the opposite of odd monomials as well, provided that the subscript $\alpha$ for the space $X_\alpha$ is appropriately chosen. Indeed, such nonlinear operators satisfy the sectorial condition $\langle x,f(x)\rangle_X\leq 0$, which may be viewed as a condition for the energy to be non-increasing. For instance, such a sectorial condition has already been used in \cite{HastirSCL} in order to prove the well-posedness of nonlinear infinite-dimensional systems like \eqref{eq:semilinStateSpaceNode}.

\section{An application to funnel control}\label{Section_Application_Chemical}

The applicability of funnel control for a coupled ODE--PDE system describing the evolution of chemical components in chemical reactors is studied in this section. Therefore, some notions related to funnel control are recalled in a first time. In the second part of this section, the different assumptions needed to apply funnel control are tested on the ODE--PDE system. Numerical simulations are then depicted to support the theoretical results.
We emphasize the fact that the results that we present in this section extend the ones of \cite{Hastir_Funnel} for the following reasons:
\begin{itemize}
    \item The input and output operators are allowed to be unbounded here, while their boundedness with some additional regularity assumptions are needed in \cite{Hastir_Funnel}. In particular, by denoting by $B:\mathbb{R}\to X, Bu = bu$ and $C:X\to\mathbb{R}, Cx = \langle c,x\rangle_X$, respectively, the conditions $b\in D(A)$ and $c\in D(A^*)$ are needed in \cite{Hastir_Funnel}, which may be viewed as restrictive;
    \item The assumption $\langle b,c\rangle_X\neq 0$ is required in \cite{Hastir_Funnel} in such a way that the state space $X$ can be split into two linear subspaces in order to deal with the Byrnes-Isidori forms;
    \item The nonlinearity is only allowed to be defined from the state space $X$ into itself in \cite{Hastir_Funnel}. Moreover, it is required there that the latter is globally Lipschitz continuous. More general classes of nonlinear operators are treated here.
\end{itemize}
We also note that already in the early days of funnel control, chemical reactors were studied, \cite{Ilchmann2004}.
\subsection{General considerations on funnel control for systems with relative degree one}
We recall the following framework yielding funnel control, which was already present in the early works of the field, \cite{Ilchmann2002}, see also \cite{Berger2021Bis} and the references therein.

For the following input-output differential relation
\begin{equation}
\dot{y}(t) = N(d(t), S(y)(t)) + M(d(t),S(y)(t))u(t),\,\, y(0) = y_0,
\label{OutputEquation}
\end{equation}
it is supposed that the following conditions hold.
\begin{assumption}\label{Disturb}
The disturbance $d\in \L^\infty(\mathbb{R}_{\geq 0};\mathbb{R})$, the nonlinear function $N$ is in $\mathcal{C}(\mathbb{R}^2;\mathbb{R})$ and the gain function $M\in\mathcal{C}(\mathbb{R}^2;\mathbb{R})$ is positive in the sense that $M(d,\varrho)>0$ for all $(d,\varrho)\in\mathbb{R}^2$.
\end{assumption}
\begin{assumption}\label{NonlinearMapT}
The map $S:\mathcal{C}(\mathbb{R}_{\geq 0};\mathbb{R})\to \L^\infty_{\text{loc}}(\mathbb{R}_{\geq 0};\mathbb{R})$ is a (possibly nonlinear) operator which satisfies the following conditions:
\begin{enumerate}
\item Bounded trajectories are mapped into bounded trajectories (BIBO property),\newline i.e. for all $k_1>0$, there exists $k_2>0$ such that for all $y\in\mathcal{C}(\mathbb{R}_{\geq 0};\mathbb{R})$,
\begin{equation}
\Vert y\Vert_{\L^\infty(\mathbb{R}_{\geq 0};\mathbb{R})}\leq k_1 \quad \implies \quad \Vert S(y)\Vert_{\L^\infty([0,t];\mathbb{R})}\leq k_2,
\label{BIBOAssum}
\end{equation}

\item The operator $S$ is causal, \\ i.e. for any $t\in\mathbb{R}_{\geq 0}$ and any $y,\hat{y}\in\mathcal{C}(\mathbb{R}_{\geq 0};\mathbb{R})$, 
\begin{equation*}y_{\vert_{[0,t)}} = \hat{y}_{\vert_{[0,t)}} \implies S(y)_{\vert_{[0,t)}} = S(\hat{y})_{\vert_{[0,t)}},
\end{equation*}
where $f_{\vert_I}$ denotes the restriction of a function $f$ to the interval $I$.
\item $S$ is locally Lipschitz in the following sense:\\ for all $t\in\mathbb{R}_{\geq 0}$ and all $y\in\mathcal{C}([0,t];\mathbb{R})$ there exist positive constants $\tau,\delta$ and $\rho$ such that for any $y_1,y_2\in\mathcal{C}(\mathbb{R}_{\geq 0};\mathbb{R})$ with $y_{i_{\vert [0,t]}} = y, i=1,2$ and $\vert y_i(s)-y(t)\vert < \delta$ for all $s\in[t,t+\tau]$ and $i=1,2$ it holds that
\begin{equation}
\Vert S(y_1) - S(y_2)\Vert_{\L^\infty([t,t+\tau];\mathbb{R})} \leq \rho\Vert y_1-y_2\Vert_{\L^\infty([t,t+\tau];\mathbb{R})}.
\label{Lipschitz}
\end{equation}
\end{enumerate}
\end{assumption}
The class of systems governed by \eqref{OutputEquation} with Assumptions~\ref{Disturb}--\ref{NonlinearMapT} is presented in \cite[Section~1]{BergerPucheSchwenninger} for systems with (possible) memory and relative degree $r\in\mathbb{N}$. This class is quite general and encompasses systems with infinite-dimensional internal dynamics as shown in \cite{BergerPucheSchwenninger} and \cite{IlchmannByrnesIsidori} for instance.

For systems written like in \eqref{OutputEquation}, a funnel controller is an adaptive model-free control method whose objective is to maintain the difference between the output and an a priori fixed reference signal within the following prescribed funnel $\mathcal{F}_\phi \coloneqq \left\{(t,e(t))\in\mathbb{R}_{\geq 0}\times\mathbb{R},\, \phi(t)\vert e(t)\vert < 1\right\}$, where the function $\phi$ is assumed to belong to
\begin{equation*}
\Phi \coloneqq \left\{\phi\in\mathcal{C}(\mathbb{R}_{\geq 0};\mathbb{R}) \colon \phi, \dot{\phi}\in \L^\infty(\mathbb{R}_{\geq 0};\mathbb{R}), \phi(t)> 0, \forall
t\in\mathbb{R}_{\geq 0} \text{ and } \liminf_{t\to\infty}\phi(t) > 0\right\}.
\end{equation*}
As described in \cite{BergerPucheSchwenninger,IlchmannByrnesIsidori} or \cite{BergerFunnelAutomatica}, a controller that achieves the  described output tracking performance is given by
\begin{equation}
u(t) = \frac{-e(t)}{1-\phi^2(t)e^2(t)},
\label{FunnelController}
\end{equation}
with $\phi\in\Phi$ and $\phi(0)\vert e(0)\vert < 1$. The following theorem, coming from \cite{Ilchmann2002}, see also \cite{BergerFunnelAutomatica} with $r=1$, characterizes the effectiveness of the controller \eqref{FunnelController} in terms of existence and uniqueness of solutions of the closed-loop systems and in terms of output tracking performance.
\begin{theorem}[\cite{Ilchmann2002},\cite{BergerFunnelAutomatica}]\label{ThmFunnel}
Consider System \eqref{OutputEquation} with Assumptions~\ref{Disturb}--\ref{NonlinearMapT}. Let $y_{\text{ref}}\in W^{1,\infty}(\mathbb{R}_{\geq 0};\mathbb{R}), \phi\in\Phi$ and $y_0\in\mathbb{R}$ such that the condition $\phi(0)\vert e(0)\vert < 1$ holds. Then the funnel controller \eqref{FunnelController} applied to \eqref{OutputEquation} results in a closed-loop system whose solution $y:[0,\omega)\to\mathbb{R}, \omega\in (0,\infty]$, has the following properties:
\begin{enumerate}
\item The solution is global, i.e. $\omega = \infty$;
\item The input $u:\mathbb{R}_{\geq 0}\to\mathbb{R}$, the gain function $k:\mathbb{R}_{\geq 0}\to\mathbb{R}$ and the output $y:\mathbb{R}_{\geq 0}\to\mathbb{R}$ are bounded;
\item The tracking error $e:\mathbb{R}_{\geq 0}\to\mathbb{R}$ evolves in the funnel $\mathcal{F}_\phi$ and is bounded away from the funnel boundaries in the sense that there exists $\epsilon> 0$ such that, for all $t\geq 0, \vert e(t)\vert \leq \frac{1}{\phi(t)} - \epsilon$.
\end{enumerate}
\end{theorem}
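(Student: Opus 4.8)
The plan is to reconstruct the classical relative-degree-one funnel argument from \cite{Ilchmann2002} (see also \cite{BergerFunnelAutomatica} with $r=1$), which decomposes into three stages: local existence of a maximal closed-loop solution, a uniform a priori estimate keeping the error strictly inside the funnel, and a non-extendability argument forcing global existence.

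\emph{Step 1 (local existence).} Inserting the feedback \eqref{FunnelController} into \eqref{OutputEquation} and setting $e = y - y_{\text{ref}}$ and $k(t) = (1 - \phi^2(t)e^2(t))^{-1}$, the closed loop becomes the causal functional differential equation
\begin{equation*}
\dot{e}(t) = N\big(d(t), S(e + y_{\text{ref}})(t)\big) - M\big(d(t), S(e + y_{\text{ref}})(t)\big)\, k(t)\, e(t) - \dot{y}_{\text{ref}}(t), \qquad e(0) = y_0 - y_{\text{ref}}(0),
\end{equation*}
whose right-hand side is, by Assumptions~\ref{Disturb}--\ref{NonlinearMapT}, measurable in $t$, continuous and causal in $e$, and well defined as long as $(t,e(t))$ stays in the open funnel $\mathcal{F}_\phi$. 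A Carath\'eodory-type existence theorem for such causal functional differential equations --- as invoked in \cite{Ilchmann2002} --- then yields a maximal solution $e$ on an interval $[0,\omega)$ with $\omega\in(0,\infty]$, enjoying the dichotomy: either $\omega=\infty$, or the graph $\{(t,e(t)):t\in[0,\omega)\}$ eventually leaves every compact subset of $\mathcal{F}_\phi$.

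\emph{Step 2 (a priori estimate, the heart of the proof).} Put $w(t):=\phi(t)e(t)$, so $|w(0)|<1$ by hypothesis. Since $\phi\in\mathcal{C}(\mathbb{R}_{\geq 0};\mathbb{R})$ is positive with $\liminf_{t\to\infty}\phi(t)>0$, it is bounded above and $\phi_{\min}:=\inf_{t\geq 0}\phi(t)>0$; hence, while $e$ stays in the funnel, $|e(t)|<1/\phi(t)\leq 1/\phi_{\min}$, so $y=e+y_{\text{ref}}$ is bounded on $[0,\omega)$ by a constant depending only on the data. Extending $y$ to a bounded continuous function on $\mathbb{R}_{\geq 0}$ and using causality, the BIBO property \eqref{BIBOAssum} of $S$ gives $\|S(y)\|_{\L^\infty([0,\omega);\mathbb{R})}\leq k_2$. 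Thus the arguments of $N$ and $M$ range over a fixed compact set, and continuity of $N,M$ together with positivity of $M$ furnish constants $N_0\geq 0$ and $M_0\geq m_0>0$ with $|N(d(t),S(y)(t))|\leq N_0$ and $m_0\leq M(d(t),S(y)(t))\leq M_0$ for a.e.\ $t$. A direct computation, using $\phi(t)e(t)k(t)=w(t)/(1-w(t)^2)$, gives for a.e.\ $t$
\begin{equation*}
\frac{1}{2}\frac{\mathrm{d}}{\mathrm{d}t}w(t)^2 = \frac{\dot{\phi}(t)}{\phi(t)}w(t)^2 + \phi(t)w(t)\big(N(\cdot)-\dot{y}_{\text{ref}}(t)\big) - \frac{M(\cdot)\,w(t)^2}{1-w(t)^2}.
\end{equation*}
The first two terms are bounded in absolute value by $\Gamma:=\|\dot{\phi}/\phi\|_{\L^\infty}+\|\phi\|_{\L^\infty}(N_0+\|\dot{y}_{\text{ref}}\|_{\L^\infty})$, while the last term is at most $-m_0 w^2/(1-w^2)$, which tends to $-\infty$ as $|w|\to 1^-$. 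Hence there is $\lambda\in(0,1)$ (namely $\lambda^2=\Gamma/(\Gamma+m_0)$) with $\frac{\mathrm{d}}{\mathrm{d}t}w(t)^2<0$ whenever $|w(t)|\geq\lambda$, and a standard comparison argument gives $|w(t)|\leq\max\{|w(0)|,\lambda\}=:1-\varepsilon_0$ with $\varepsilon_0>0$, for all $t\in[0,\omega)$. In particular $|e(t)|\leq(1-\varepsilon_0)/\phi(t)\leq 1/\phi(t)-\varepsilon_0/\|\phi\|_{\L^\infty}$, which is assertion~3 with $\epsilon:=\varepsilon_0/\|\phi\|_{\L^\infty}$.

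\emph{Step 3 (global existence and boundedness).} The bound $|w(t)|\leq 1-\varepsilon_0$ confines $(t,e(t))$ to a compact subset of $\mathcal{F}_\phi$ on every finite horizon, contradicting the second alternative in Step 1; hence $\omega=\infty$, which is assertion~1. Then $k=(1-w^2)^{-1}\leq\varepsilon_0^{-1}(2-\varepsilon_0)^{-1}$ is bounded, so $u=-ke$ is bounded (as is $k$), and $y=e+y_{\text{ref}}$ is bounded, giving assertion~2. The main obstacle is Step 2: one must see that the feedback term blows up fast enough near the funnel boundary to dominate every other contribution, and this crucially relies on the BIBO property of $S$ to confine the arguments of $N$ and $M$ to a compact set --- precisely the hypothesis that the BIBO results of \Cref{Section_NonlinearOperator,Section_Local_Lipschitz} are designed to supply for the internal dynamics.
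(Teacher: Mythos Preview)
The paper does not supply its own proof of this theorem: it is stated as a citation from \cite{Ilchmann2002} and \cite{BergerFunnelAutomatica} (with $r=1$) and used as a black box in \Cref{Section_Application_Chemical}. So there is no in-paper argument to compare against. Your reconstruction is the classical relative-degree-one funnel proof from those references --- local existence via a Carath\'eodory/causal fixed-point argument, the Lyapunov-type estimate on $w=\phi e$ exploiting that the gain term $M w^2/(1-w^2)$ blows up near the boundary while the BIBO property of $S$ confines the remaining terms to a compact set, and the contradiction with non-extendability --- and it is essentially correct as sketched.

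A couple of minor points worth tightening. First, in Step~2 you invoke $\|\dot\phi/\phi\|_{\L^\infty}$; this is indeed finite because $\phi\in\Phi$ gives $\phi,\dot\phi\in\L^\infty$ together with $\phi(t)>0$ and $\liminf_{t\to\infty}\phi(t)>0$, which forces $\inf_{t\geq 0}\phi(t)>0$ by continuity on compacts --- but it deserves a sentence, since the definition of $\Phi$ does not state $\inf\phi>0$ directly. Second, the threshold argument yields $\frac{\mathrm{d}}{\mathrm{d}t}w^2<0$ only for $|w(t)|>\lambda$ (strict), not $\geq\lambda$, though the comparison conclusion $|w(t)|\leq\max\{|w(0)|,\lambda\}$ is unaffected. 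Finally, the local Lipschitz property of $S$ from Assumption~\ref{NonlinearMapT}(3) is what actually drives the local existence in Step~1; you mention Assumptions~\ref{Disturb}--\ref{NonlinearMapT} there but it is worth flagging explicitly which part is doing the work.
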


\subsection{Dynamical analysis of a coupled ODE -- PDE system for funnel control}

\begin{figure}[H]
\begin{center}
\includegraphics[scale=1]{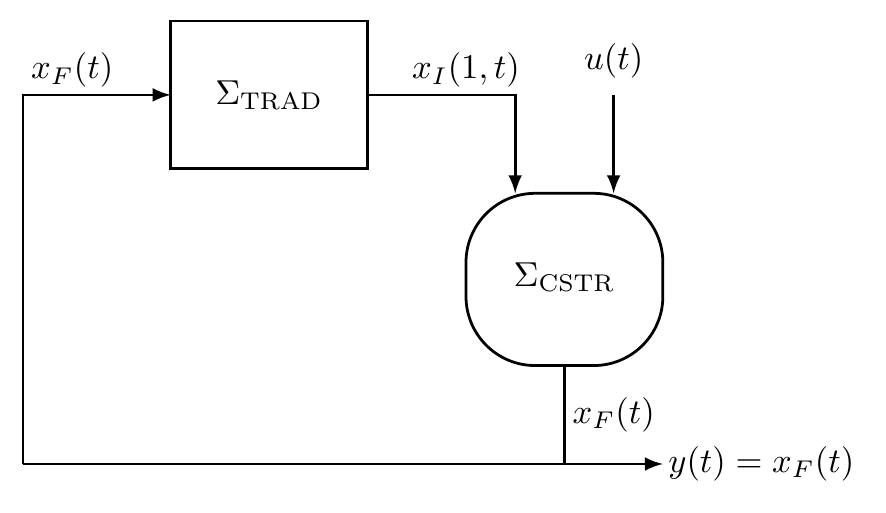}
    \caption{Coupled CSTR-tubular reactor system}
    \label{fig:chemicalSystem}
\end{center}
\end{figure}

Consider the system depicted in \Cref{fig:chemicalSystem} comprised of a continuous stirred-tank reactor (CSTR) and a tubular reactor with axial dispersion (TRAD) that is similar to the one discussed in \cite{Khatibi2020}, dropping the terms describing disturbances for simplicity. Furthermore, in order for the system being able to serve as an example for the type of systems discussed in this contribution, two changes have been made. First, the output has been chosen to be taken in the ODE part of the system in order for the system to be of relative degree one. Second, the Dirichlet boundary control has been supplanted with a Neumann boundary control ensuring that the original linear distributed system will be BIBO stable. Using the same notation as \cite{Khatibi2020} this input-output system is then described by the coupled PDE-ODE system
\begin{align*}
        &\Sigma_{\text{TRAD}}\left\{
        \begin{aligned}
        \displaystyle\frac{\partial x_I}{\partial t}(\zeta,t) &= D  \frac{\partial^2 x_I}{\partial \zeta^2}(\zeta,t) - v \frac{\partial x_I}{\partial \zeta}(\zeta,t) - \psi x_I(\zeta,t) + f(x_I(\zeta,t)),\\ 
        x_I(\zeta, 0) &= 1, \\
        \displaystyle\frac{\partial x_I}{\partial \zeta}(0,t) &= x_F(t), \,  \frac{\partial x_I}{\partial \zeta}(1,t) = 0, \\
        \end{aligned}\right.
        \\
        &\Sigma_{\text{CSTR}}\left\{
        \begin{aligned}
        \Dot{x}_F(t) &= a_1 x_F(t) + a_2 u(t) + R x_I(1, t),\\
        x_F(0) &= 1,\\
        y(t) &= x_F(t),
        \end{aligned}\right.
\end{align*}
where the temporal and the spatial variables satisfy $t\geq 0$ and $\zeta\in [0,1]$, respectively, with $x_F(t) \in \mathbb{R}$ and $x_I(\cdot,t)\in \L^2([0,1]; \mathbb{R})$. The constants $v > 0$ and $D > 0$ are the transport and diffusion velocities in the tubular reactor, $R > 0$ describes the recycling within the system, and $a_1$, $a_2$ and $\psi > 0$ are constants describing the chemical reactions within the two reactors. Furthermore $f$ is a nonlinear mapping from $\L^2([0,1]; \mathbb{R})$ to $\L^2([0,1]; \mathbb{R})$, such as e.g. the Lipschitz continuous function $f(x) = \frac{\vert x\vert}{\vert x\vert+1}$ from \cite{DelattreDochainWinkin2004}.

We can straightforwardly bring this system into the principle form of the system class in \cite{Berger2020funnel} as
\begin{equation*}
\Dot{y}(t) = S(y)(t) + a_2 \, u(t),\,\,y(0) = 1
\end{equation*}
with the operator $S: \mathcal{C}(\mathbb{R}_{\geq 0}; \mathbb{R}) \rightarrow L_{\textrm{loc}}^\infty(\mathbb{R}_{\geq 0}; \mathbb{R})$ given by
\begin{equation}
    S(\eta(\cdot)) = a_1 \eta(\cdot) + R x(1,\cdot)
    \label{Map_S}
\end{equation}
where $x$ is the solution of the system
\begin{equation}
\left\{
\begin{aligned}
        \frac{\partial x}{\partial t}(\zeta,t) &= D  \frac{\partial^2 x}{\partial \zeta^2}(\zeta,t) - v \frac{\partial x}{\partial \zeta}(\zeta,t) - \psi x(\zeta,t) + f(x)(\zeta,t),\\
        x(\zeta,0) &= 1,\\
        \frac{\partial x}{\partial \zeta}(0,t) &= \eta(t), \, \frac{\partial x}{\partial \zeta}(1,t) = 0.
        \end{aligned}\right.\label{Eq:PDE-BC_NL}
\end{equation}
While these internal dynamics are given in terms of a boundary control system, one could -- using the methods laid out in \cite[Chapter~10]{CurtainZwartNew} and \cite{SchwenningerISS} -- rewrite this system to arrive at one in the form of Equation $\eqref{eq:semilinStateSpaceNode}$.  

The state space $X = \L^2([0,1];\mathbb{R})$ is equipped with the following weighted inner product
\begin{equation*}
    \langle f,g\rangle_\rho \coloneqq \int_0^1 \rho(\zeta) f(\zeta)g(\zeta) \d \zeta,
\end{equation*}
where $\rho(\zeta) \coloneqq \e^{-\frac{v}{D}\zeta}$. The operator $A$ is defined as $Ax = D\frac{\mathrm{d}^2x}{\mathrm{d}\zeta^2} - v\frac{\mathrm{d}x}{\mathrm{d}\zeta} + \psi x$, where $x$ is in $D(A)$ expressed as 
\begin{equation*}
    D(A) \coloneqq \left\{x\in \H^2([0,1];\mathbb{R}) \colon \frac{\mathrm{d}x}{\mathrm{d}\zeta}(0) = 0 = \frac{\mathrm{d}x}{\mathrm{d}\zeta}(1)\right\}.
\end{equation*}
The boundary operator $B: \mathbb{R}\to X_{-1}$ is given as $Bu = -D\delta_0 u$, where $\delta_0$ denotes the Dirac delta distribution at $\zeta = 0$. The observation operator $C:X_1\to\mathbb{R}$ is the point measurement at $\zeta = 1$, i.e. $Cx = x(1)$. With this framework, it is easy to see that the operator $A$ is self-adjoint by considering $\langle\cdot,\cdot\rangle_\rho$ as inner product. Moreover, $A$ is a Riesz-spectral operator whose eigenvalues and normalized eigenfunctions are given by 
\begin{equation*}
    \lambda_0 = -\psi, \,\, \lambda_n = -\frac{v^2+4D^2n^2\pi^2}{4D} - \psi, n\in\mathbb{N}_0
\end{equation*}
and
\begin{align*}
    \phi_0(\zeta) &= \sqrt{\frac{D}{v(1-\e^{\frac{-v}{D}})}}1_{[0,1]}(\zeta),\,\, \phi_n(\zeta) = \frac{\sqrt{2}v}{\sqrt{4n^2\pi^2D^2+v^2}}\left[\e^{\frac{v}{2D}\zeta}\left(\sin(n\pi\zeta) -\frac{2n\pi D}{v}\cos(n\pi\zeta)\right)\right],
\end{align*}
respectively. 

In order to apply \Cref{cor:BIBO_under_global_Lip_on_X} to deduce BIBO stability of the semilinear system, one shall first check that the extended linear system $(A,[\begin{smallmatrix} B & I\end{smallmatrix}],\left[\begin{smallmatrix} C\\ I\end{smallmatrix}\right])$ is $\L^\infty$-BIBO stable.
\begin{prop}
The extended linear system $(A,[\begin{smallmatrix} B & I\end{smallmatrix}],\left[\begin{smallmatrix} C\\ I\end{smallmatrix}\right])$ is $\L^\infty$-BIBO stable.
\end{prop}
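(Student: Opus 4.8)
The plan is to apply \Cref{prop:extendedLinSystemBIBO} in the form extended to $\widetilde B=I$, $\widetilde C=I$ (see \Cref{rem:extendedlinearsys}), which reduces the claim to three checks: that $B$ is $\L^\infty$-admissible, that $\Sigma(A,B,C)$ is $\L^\infty$-BIBO stable, and that $\Sigma(A,I,C)$ is $\L^\infty$-BIBO stable. Exponential stability holds since $A$ is self-adjoint with respect to $\langle\cdot,\cdot\rangle_\rho$ with spectrum contained in $(-\infty,-\psi]$, so $\lVert T(t)\rVert=\e^{-\psi t}$; self-adjointness together with negativity of the spectrum also yields analyticity, so the fractional powers and the estimate \eqref{EstimateAnalytic} are at our disposal. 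Since $A$ is, up to the exponential weight, a second order elliptic operator with Neumann-type boundary conditions, one has $X_\theta\simeq\H^{2\theta}([0,1];\mathbb R)$ for $0\le\theta<\tfrac34$; for our purposes it is enough to use the orthonormal eigenbasis $\{\phi_n\}$ directly, together with $|\lambda_n|\sim n^2$.

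For the admissibility of $B=-D\delta_0(\cdot)$: since $|\phi_n(0)|$ stays bounded in $n$, the series $\sum_n|\lambda_n|^{-2\beta}|\phi_n(0)|^2$ converges for every $\beta>\tfrac14$, so $\delta_0\in X_{-\beta}$ and $B\in\mathcal L(\mathbb R,X_{-\beta})$ for such $\beta$; fixing, say, $\beta=\tfrac14+\tfrac{\epsilon}{2}<1$, the estimate \eqref{eq:(-A)^alpha_Linfty_admissibility_constant} applied to $(-A)^{-\beta}B\in\mathcal L(\mathbb R,X)$ shows that $B$ is (infinite-time) $\L^\infty$-admissible. The same argument with $|\phi_n(1)|$ bounded shows that $Cx=x(1)$ defines a bounded operator $X_\theta\to\mathbb R$ for every $\theta>\tfrac14$. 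For $\Sigma(A,I,C)$ with $x_0=0$ and $\tilde u\in\L^\infty(\mathbb R_{\ge0};X)$, the state satisfies $x(t)\in X_{1/2}$ with $\lVert x(t)\rVert_{1/2}\le C_{2,\infty}\lVert\tilde u\rVert_{\L^\infty([0,t];X)}$ by $\L^\infty$-admissibility of $(-A)^{1/2}$ (\Cref{rem:(-A)^alpha_Linfty-adm}), and since the transfer function $C(\cdot I-A)^{-1}$ has no feedthrough, the output is $y(t)=Cx(t)$, hence $\lVert y\rVert_{\L^\infty([0,t];\mathbb R)}\lesssim\lVert\tilde u\rVert_{\L^\infty([0,t];X)}$. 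For $\Sigma(A,B,C)$ one argues analogously: $\L^\infty$-admissibility of $(-A)^{3/4-\epsilon}B$ (valid because $(3/4-\epsilon)+\beta<1$) places the state in $X_{3/4-\epsilon}\subset D(C)$ with norm bounded by $\lVert u\rVert_{\L^\infty([0,t];\mathbb R)}$, and the transfer function is $\mathbf G(s)=C(sI-A)^{-1}B$ with vanishing feedthrough -- which one verifies by solving $(sI-A)x=0$, $x'(0)=1$, $x'(1)=0$ explicitly and observing that $x(1)=\mathbf G(s)\to 0$ (in fact like $s^{-1/2}\e^{-\sqrt{s/D}}$) as $s\to+\infty$ -- so that again $y(t)=Cx(t)$ and $\lVert y\rVert_{\L^\infty([0,t];\mathbb R)}\lesssim\lVert u\rVert_{\L^\infty([0,t];\mathbb R)}$. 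One may equivalently deduce the two BIBO statements from the fact that the convolution kernels $CT(\cdot)B$ and $CT(\cdot)$ lie in $\L^1(\mathbb R_{\ge0})$ by \eqref{EstimateAnalytic}, combined with \cite{SWZ_BIBOStability}.

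Collecting the three items and invoking \Cref{prop:extendedLinSystemBIBO} yields the assertion. The step I expect to be most delicate is not an estimate but a matter of set-up: making rigorous that the abstract system node arising from the boundary control reformulation (in the spirit of \cite[Chapter~10]{CurtainZwartNew} and \cite{SchwenningerISS}) is well posed with transfer function $C(\cdot I-A)^{-1}B$ and, in particular, carries no hidden direct feedthrough term even though both $B$ and $C$ are unbounded; and, alongside this, keeping the chain of interpolation indices consistent -- $\delta_0\in X_{-(1/4+)}$, $C$ bounded on $X_{1/4+}$, the state reaching $X_{3/4-}$ -- so that every exponent entering \eqref{EstimateAnalytic} stays strictly below $1$. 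Once \eqref{EstimateAnalytic} is in force the remaining computations are routine.
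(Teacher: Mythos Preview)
Your proof is correct and follows the same overall structure as the paper's: apply \Cref{prop:extendedLinSystemBIBO} and verify its three hypotheses. The admissibility of $B$ and the BIBO stability of $\Sigma(A,I,C)$ are handled essentially identically---the paper cites \cite[Proposition~6.4]{SWZ_BIBOStability} for the latter, which packages precisely the parabolic estimate you spell out by hand. The only genuine difference lies in the verification of BIBO stability for $\Sigma(A,B,C)$: the paper invokes the Riesz-spectral criterion \cite[Proposition~4.3]{SWZ_BIBOStability}, checking that $\sum_n |b_n c_n|/|\mathrm{Re}(\lambda_n)| \sim \sum_n n^{-2} < \infty$, whereas you argue via state regularity in $X_{3/4-\epsilon}$ together with vanishing feedthrough (or, in your alternative phrasing, via $CT(\cdot)B \in \L^1(\mathbb{R}_{\geq 0})$). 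Both routes are valid; the paper's is shorter and sidesteps the feedthrough question you flag as delicate, while yours does not rely on the diagonal structure and would carry over to non-diagonalizable parabolic generators. Your concern about a hidden direct feedthrough is legitimate but resolves here: the boundary control reformulation of the Neumann-to-point-evaluation map indeed yields $\mathbf{G}(s)=C(sI-A)^{-1}B$ with no additional term, consistent with your observation that $\mathbf{G}(s)\to 0$.
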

\begin{proof}
    We use \Cref{prop:extendedLinSystemBIBO} to show this.
    \begin{description}
    \item[Condition~1: $\L^\infty$-control-admissibility of $B$] \hfill \newline
Observe that any of the spaces $X_{-\eta}, 0\leq \eta\leq 1$, can be expressed as 
\begin{equation*}
    X_{-\eta} = \left\{z\in X_{-1} \colon \sum_{n=0}^\infty \frac{\vert\langle z,\phi_n\rangle_X\vert^2}{\vert -\lambda_n\vert^{2\eta}} < \infty\right\}.
\end{equation*}
Now for $z = Bu = -D \delta_0 u$ we have that $\vert\langle z,\phi_n\rangle_X\vert^2 \sim n^0$ so that
\begin{equation*}
    \sum_{n=0}^\infty \frac{\vert\langle z,\phi_n\rangle_X\vert^2}{\vert -\lambda_n\vert^{2\eta}} \sim \sum_{n=0}^\infty \frac{1}{n^{4\eta}},
\end{equation*}
which converges for all $\eta > \frac{1}{4}$. 
Hence, $-D\delta_0 u \in X_{-\eta}$ for any $\eta\in (\frac{1}{4},1]$ and thus in particular, $B\in\mathcal{L}(U,X_{-\frac{1}{2}})$ which entails that $B$ is $\L^\infty$-admissible, see~\Cref{rem:(-A)^alpha_Linfty-adm}.
    \item[Condition~2: $\L^\infty$-BIBO stability of $\Sigma(A,B,C,\mathbf{G})$] \hfill \newline
    This follows directly from \cite[Proposition~4.3]{SWZ_BIBOStability} after finding that (using the nomenclature of \cite{SWZ_BIBOStability}) we have for $n > 0$ that $b_n \sim n^0$, $c_n \sim n^0$ and $\textrm{Re}(\lambda_n) \sim n^2$ so that 
    \begin{equation*}
        \sum_{n > 0} \frac{|b_n c_b|}{|\textrm{Re}(\lambda_n)|} \sim \sum_{n > 0} \frac{1}{n^2} < \infty.
    \end{equation*}
    \item[Condition~3: $\L^\infty$-BIBO stability of $\Sigma(A,I,C,C(\cdot I - A)^{-1})$] \hfill \newline
    The boundedness of the operator $C$ from $X_\frac{1}{2}$\footnote{The space $X_{\frac{1}{2}}$ is the first order Solobev space, namely $\H^1([0,1];\mathbb{R})$ with the following inner product
\begin{equation*}
    \langle f,g\rangle_\frac{1}{2} = D\int_0^1 \e^{-\frac{v}{D}\zeta}\frac{\mathrm{d}f}{\mathrm{d}\zeta}\frac{\mathrm{d}g}{\mathrm{d}\zeta} \d \zeta + \psi\int_0^1 \e^{-\frac{v}{D}\zeta}fg \d \zeta,
\end{equation*}
which is equivalent to the standard inner product whose $X_\frac{1}{2}$ is equipped with.} into $\mathbb{R}$ directly implies this by \cite[Proposition~6.4]{SWZ_BIBOStability}.     
\qed
    \end{description}
\end{proof}

Let us now fix $\alpha = \frac{1}{2}$. According to \Cref{cor:BIBO_under_global_Lip_on_X}, the next assumption that is needed is $B\in\mathcal{L}(U,X_{-(1-\alpha)})$ with $(-A)^\alpha B$ being $\L^\infty$-admissible. Since $\alpha = \frac{1}{2}$, first observe that $-D\delta_0\in X_{-(1-\alpha)}$. Secondly, note that $(-A)^\alpha B\in\mathcal{L}(U,X_{-\beta})$ for any $\beta\in(\frac{3}{4},\frac{3}{2}]$. In particular, $(-A)^\frac{1}{2}B\in\mathcal{L}(U,X_{-\beta})$ for $\beta \in (\frac{3}{4},1)$. As a consequence of \Cref{rem:(-A)^alpha_Linfty-adm}, i., the operator $(-A)^\alpha B$ is $\L^\infty$-admissible. 

The last condition we will check in order to have $\L^\infty$-BIBO stability of the nonlinear system \eqref{Eq:PDE-BC_NL} is $\delta^{-\frac{1}{2}}LM_\frac{1}{2}\Gamma(\frac{1}{2})<1$. First, observe that the nonlinear operator $f:X\to X, f(x) = \frac{\vert x\vert}{\vert x\vert + 1}$ is globally Lipschitz continuous from $X$ into $X$. Consequently, it preserves that property when defined from $X_\frac{1}{2}$ into $X$. One of its Lipschitz constant, computed as the supremum of the derivatives of the scalar function $f:\mathbb{R}\to [0,1], f(t) = \frac{\vert t\vert}{\vert t\vert + 1}$, is given by $L = 1$. 

We shall now give an estimate on the constant $M_\frac{1}{2}$, coming from the inequality \eqref{EstimateAnalytic}. For this, let us assume that the constant $\delta$ satisfies $\delta\in[\varepsilon,\psi-\varepsilon]$ for some $\varepsilon > 0$ sufficiently small in such a way that the semigroup generated by the operator $A+\delta I$ is still analytic and exponentially stable. It can be shown that the operator norm of $(-A)^\frac{1}{2}T(t)$ is upper-bounded as follows:
\begin{align*}
    &\Vert (-A)^\frac{1}{2}T(t)\Vert\leq\frac{\e^{-\delta t}}{t^{-\frac{1}{2}}}\sqrt{\max\left\{\frac{\psi}{\psi-\delta},\frac{-\lambda_1}{-\lambda_1-\delta},\e\right\}}\frac{\e^{-\frac{1}{2}}}{\sqrt{2}}\leq\frac{\e^{-\delta t}}{t^{-\frac{1}{2}}}\sqrt{\max\left\{\frac{\psi}{\varepsilon},\e\right\}}\frac{\e^{-\frac{1}{2}}}{\sqrt{2}},
\end{align*}
where the relations $\delta\in(\varepsilon,\psi-\varepsilon)$ and $-\lambda_1 = \psi + c$ for some $c>0$ have been used. This implies that a possible constant $M_\frac{1}{2}$ is $M_\frac{1}{2} = \sqrt{\max\left\{\psi\varepsilon^{-1},\e\right\}}(2\e)^{-\frac{1}{2}}$. As a consequence, since $L=1$, $\delta^{-\frac{1}{2}} LM_{\frac{1}{2}} \Gamma(\frac{1}{2}) <1$ is equivalent to
\begin{equation}
\max\left\{\psi\varepsilon^{-1},\e\right\}<2 \e \delta\pi^{-1},
\label{AppliThm4}
\end{equation}
where $\Gamma(\frac{1}{2}) = \sqrt{\pi}$ has been used. 

By using \Cref{cor:BIBO_under_global_Lip_on_X}, one may conclude that a sufficient condition for the nonlinear system \eqref{Eq:PDE-BC_NL} to be $\L^\infty$-BIBO stable is that \eqref{AppliThm4} holds. This result shows that the map $S$, mapping the input $\eta$ to the output $x(1,\cdot)$ is $\L^\infty$-BIBO stable, which shows that the relation \eqref{BIBOAssum} of \Cref{NonlinearMapT} is satisfied.

It remains to show that the map $S$ is locally Lipschitz continuous in the sense of \eqref{Lipschitz} to be able to apply funnel control. For this, let us consider the following proposition.

\begin{prop}
Under assumption \eqref{AppliThm4}, the operator $S$ defined in \eqref{Map_S} satisfies the inequality \eqref{Lipschitz} of \Cref{NonlinearMapT}.
\end{prop}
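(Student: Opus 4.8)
The plan is to reduce the statement to a Lipschitz estimate, in the $X_{\frac12}$-norm, for the state of the abstract semilinear form of \eqref{Eq:PDE-BC_NL} on the short interval $[t,t+\tau]$, and then to compose with the bounded point-evaluation $C\in\mathcal{L}(X_{\frac12},\mathbb{R})$, $Cx=x(1)$. First I would write $S(y_1)-S(y_2)=a_1(y_1-y_2)+R\,C(x_1-x_2)$, where $x_i$ denotes the mild solution of the abstract system with generator $A$, control operator $B=-D\delta_0\in\mathcal{L}(\mathbb{R},X_{-\frac12})$, nonlinearity $f(x)=\tfrac{|x|}{|x|+1}$ (globally Lipschitz from $X$, hence from $X_{\frac12}$, into $X$, with constant $L=1$), initial value $x(\cdot,0)\equiv 1\in X_{\frac12}$, and boundary input $y_i$. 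Since $f$ has linear growth, \Cref{lem:existence_of_mild_solution} (in particular its global-existence statement, using $y_i\in\mathcal{C}(\mathbb{R}_{\geq0};\mathbb{R})\subset\L^\infty_{\mathrm{loc}}$) guarantees that each $x_i$ is a global mild solution lying in $\L^\infty_{\mathrm{loc}}(\mathbb{R}_{\geq0};X_{\frac12})\cap\mathcal{C}(\mathbb{R}_{\geq0};X)$, so all the $\L^\infty$-norms below are finite. Hence it suffices to bound $\|x_1-x_2\|_{\L^\infty([t,t+\tau];X_{\frac12})}$ by a fixed multiple of $\|y_1-y_2\|_{\L^\infty([t,t+\tau];\mathbb{R})}$, uniformly over the admissible pairs; the constants $\tau,\delta$ in \Cref{NonlinearMapT}(3) may then be chosen arbitrarily (say $\tau=1$), as the near-constancy of $y_i$ at $t$ is not needed once $f$ is globally Lipschitz.

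Next I would use uniqueness of mild solutions: since $y_1|_{[0,t]}=y_2|_{[0,t]}$, the restrictions of $x_1,x_2$ to $[0,t]$ solve the same initial value problem, so $x_1\equiv x_2$ on $[0,t]$ and in particular $x_1(t)=x_2(t)$ (well-defined by continuity in $X$). Rewriting the mild-solution identity with a restart at time $t$ via the semigroup property gives, for $t'\in[t,t+\tau]$,
\begin{equation*}
x_i(t')=T(t'-t)x_i(t)+\int_t^{t'}T(t'-s)\bigl[B y_i(s)+f(x_i(s))\bigr]\d s,
\end{equation*}
so the leading terms cancel in the difference. Applying $(-A)^{\frac12}$ and the triangle inequality yields
\begin{equation*}
\|x_1(t')-x_2(t')\|_{\frac12}\le\Bigl\|\int_t^{t'}T(t'-s)(-A)^{\frac12}B\,(y_1-y_2)(s)\d s\Bigr\|_X+\Bigl\|\int_t^{t'}T(t'-s)(-A)^{\frac12}\bigl(f(x_1(s))-f(x_2(s))\bigr)\d s\Bigr\|_X.
\end{equation*}
After the substitution $s\mapsto s-t$, the first term is controlled by the (infinite-time) $\L^\infty$-admissibility of $(-A)^{\frac12}B$ established above for this example, giving a bound $C_{1,\infty}\|y_1-y_2\|_{\L^\infty([t,t+\tau];\mathbb{R})}$, and the second by the $\L^\infty$-admissibility of $(-A)^{\frac12}$ from \Cref{rem:(-A)^alpha_Linfty-adm} together with \eqref{eq:GlobalLipschitz}, giving $C_{2,\infty}L\,\|x_1-x_2\|_{\L^\infty([t,t'];X_{\frac12})}$ with $C_{2,\infty}\le M_{\frac12}\Gamma(\tfrac12)\delta^{-\frac12}$.

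Taking the supremum over $t'\in[t,t+\tau]$ and recalling that assumption \eqref{AppliThm4} is precisely the inequality $M_{\frac12}\Gamma(\tfrac12)\delta^{-\frac12}L<1$ (with $L=1$), hence $C_{2,\infty}L<1$, I would absorb the state term on the left-hand side to obtain
\begin{equation*}
\|x_1-x_2\|_{\L^\infty([t,t+\tau];X_{\frac12})}\le\frac{C_{1,\infty}}{1-C_{2,\infty}L}\,\|y_1-y_2\|_{\L^\infty([t,t+\tau];\mathbb{R})}.
\end{equation*}
Composing with $|C(x_1-x_2)(s)|\le\|C\|_{\mathcal{L}(X_{\frac12},\mathbb{R})}\|x_1(s)-x_2(s)\|_{\frac12}$ and adding the $a_1$-term proves \eqref{Lipschitz} with $\rho=|a_1|+|R|\,\|C\|_{\mathcal{L}(X_{\frac12},\mathbb{R})}\,C_{1,\infty}(1-C_{2,\infty}L)^{-1}$. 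The only genuinely delicate point is this final absorption step: one must know a priori that $\|x_1-x_2\|_{\L^\infty([t,t+\tau];X_{\frac12})}$ is finite before moving it across the inequality, which is why the global-existence (linear-growth) part of \Cref{lem:existence_of_mild_solution} is invoked at the outset; alternatively, since the admissibility constant of $(-A)^{\frac12}$ on $[0,\tau]$ tends to $0$ as $\tau\to0$, one could instead shrink $\tau$ to force $C_{2,\tau}L<1$ and use \eqref{AppliThm4} only to keep the BIBO bound available in the background.
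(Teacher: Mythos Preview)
Your proof is correct and follows essentially the same route as the paper: both restart the mild-solution formula at time $t$, cancel the initial data by uniqueness, estimate the two integral pieces via the infinite-time $\L^\infty$-admissibility of $(-A)^{\frac12}B$ and $(-A)^{\frac12}$, and then absorb the state term using \eqref{AppliThm4} before composing with the bounded trace $C\in\mathcal{L}(X_{\frac12},\mathbb{R})$. Your version is slightly more explicit (the $a_1$-term, the finiteness of $\|x_1-x_2\|_{\L^\infty([t,t+\tau];X_{\frac12})}$ needed for the absorption) and uses the notation $C_{1,\infty},C_{2,\infty}$ consistently with \Cref{lem:existence_of_mild_solution} and \Cref{rem:lin_syst_alpha_BIBO}, whereas the paper's proof swaps the roles of these symbols.
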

\begin{proof}
Let us fix $t\in\mathbb{R}_{\geq 0}$ and let us consider an arbitrary $\tau\in\mathbb{R}_{\geq 0}$. Moreover, let us pick any $\eta_1, \eta_2\in\mathcal{C}(\mathbb{R}_{\geq 0};\mathbb{R})$ that satisfy $\eta_{1_{\vert_{[0,t]}}} = \eta = \eta_{2_{\vert_{[0,t]}}}$ for some fixed $\eta\in\mathcal{C}([0,t];\mathbb{R})$. As a consequence, the mild solutions of \eqref{Eq:PDE-BC_NL} with $\eta_1$ and $\eta_2$ as inputs, denoted by $x_1$ and $x_2$, respectively, are equal up to time $t$. For $\tilde{t}\in[t,t+\tau]$, these mild solutions are expressed as
\begin{align*}
    (-A)^\frac{1}{2}x_i(\tilde{t}) &= (-A)^\frac{1}{2}T(\tilde{t}-t)x_i(t) + \int_t^{\tilde{t}} T(\tilde{t}-s)(-A)^\frac{1}{2}f(x_i(s)) \d s + \int_t^{\tilde{t}} T(\tilde{t}-s)(-A)^\frac{1}{2}B\eta_i(s) \d s,
\end{align*}
$i = 1,2$. By making the difference between $x_1$ and $x_2$ and by taking the $X$-norm on both sides implies that 
\begin{align*}
    \Vert x_1 - x_2\Vert_{\L^\infty([t,t+\tau];X_\frac{1}{2})}\leq M_\frac{1}{2}L\Gamma(\tfrac{1}{2})\delta^{-\frac{1}{2}}\Vert x_1 - x_2\Vert_{\L^\infty([t,t+\tau];X_\frac{1}{2})}+C_{2,\infty}\Vert \eta_1 - \eta_2\Vert_{\L^\infty([t,t+\tau];\mathbb{R})},
\end{align*}
where $C_{2,\infty}$ and $M_\frac{1}{2}\Gamma(\frac{1}{2})\delta^{-\frac{1}{2}}$ are the infinite-time $\L^\infty$-admissibility constants of the operators, $(-A)^\frac{1}{2}B$ and $(-A)^\frac{1}{2}$, respectively. Assumption \eqref{AppliThm4} together with the boundedness of the trace operator from $X_\frac{1}{2}$ to $\mathbb{R}$ conclude the proof.\qed
\end{proof}

According to \Cref{ThmFunnel}, funnel control is conducive for \eqref{Eq:PDE-BC_NL} provided that the initial error between the output and the tracked reference is in the prescribed funnel. Some numerical simulations are reported in the following section.
\subsection{Numerical simulations}\label{Sec:numsim}
As parameters for the PDE and the ODE in \eqref{Eq:PDE-BC_NL}, we consider the following values $D = 0.1, v = 0.4, \psi = 2.8, a_1 = -1, a_2 = 2, R = 3$.
The reference signal that the output $x_F(t)$ is supposed to track is set as $y_{ref}(t) = \frac{1}{2}\cos(t)$, while the prescribed funnel boundaries in which the output error evolves are fixed to $\frac{-1}{\phi(t)}$ and $\frac{1}{\phi(t)}, \phi(t) = (2\e^{-2t} + 0.2)^{-1}$. The spatial interval $[0,1]$ is discretized into $n$ equal pieces, $n = 100$. Then the PDE -- ODE system \eqref{Eq:PDE-BC_NL} in closed-loop with the funnel controller \eqref{FunnelController} is discretized by using finite differences and it is integrated afterwards with the ODE solver \texttt{ode23s} of Matlab\copyright. The resulting state of the PDE \eqref{Eq:PDE-BC_NL}, namely $x_I(\zeta,t)$ is depicted in \Cref{fig:PDE_State}. The error between the output $x_F(t)$ and the reference signal $y_{ref}(t)$ together with the prescribed funnel are given in \Cref{fig:Error_VS_Funnel}. The funnel controller is depicted in \Cref{fig:Input}.

\begin{figure}[H]
    \centering
    \includegraphics[trim = 0cm 1.5cm 1cm 1cm,scale=0.45]{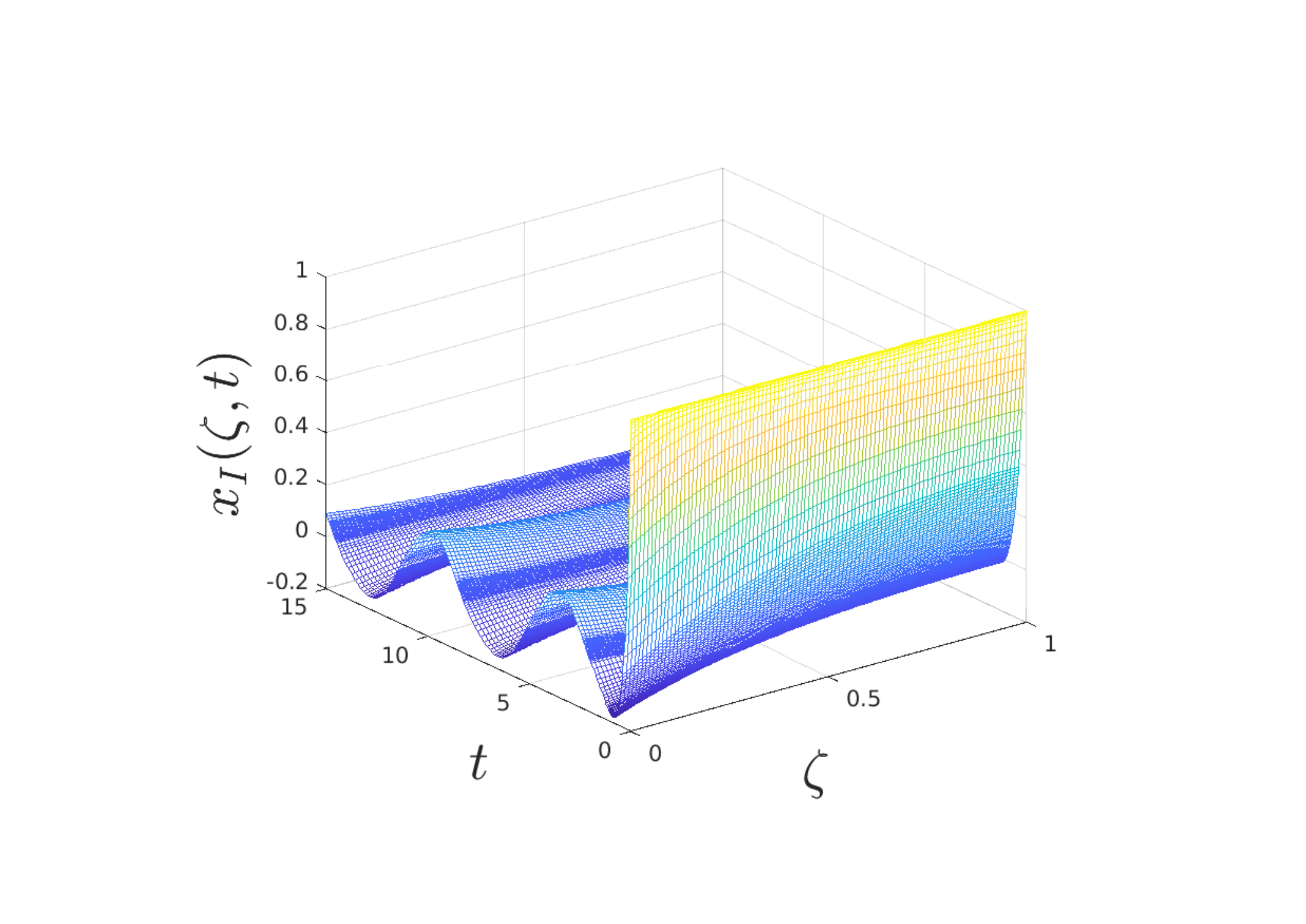}
    \caption{State variable $x_I(\zeta,t)$ of the PDE \eqref{Eq:PDE-BC_NL}.}
    \label{fig:PDE_State}
\end{figure}
\begin{figure}[H]
    \centering
    \includegraphics[trim=0cm 1.5cm 0cm 0cm,scale=0.45]{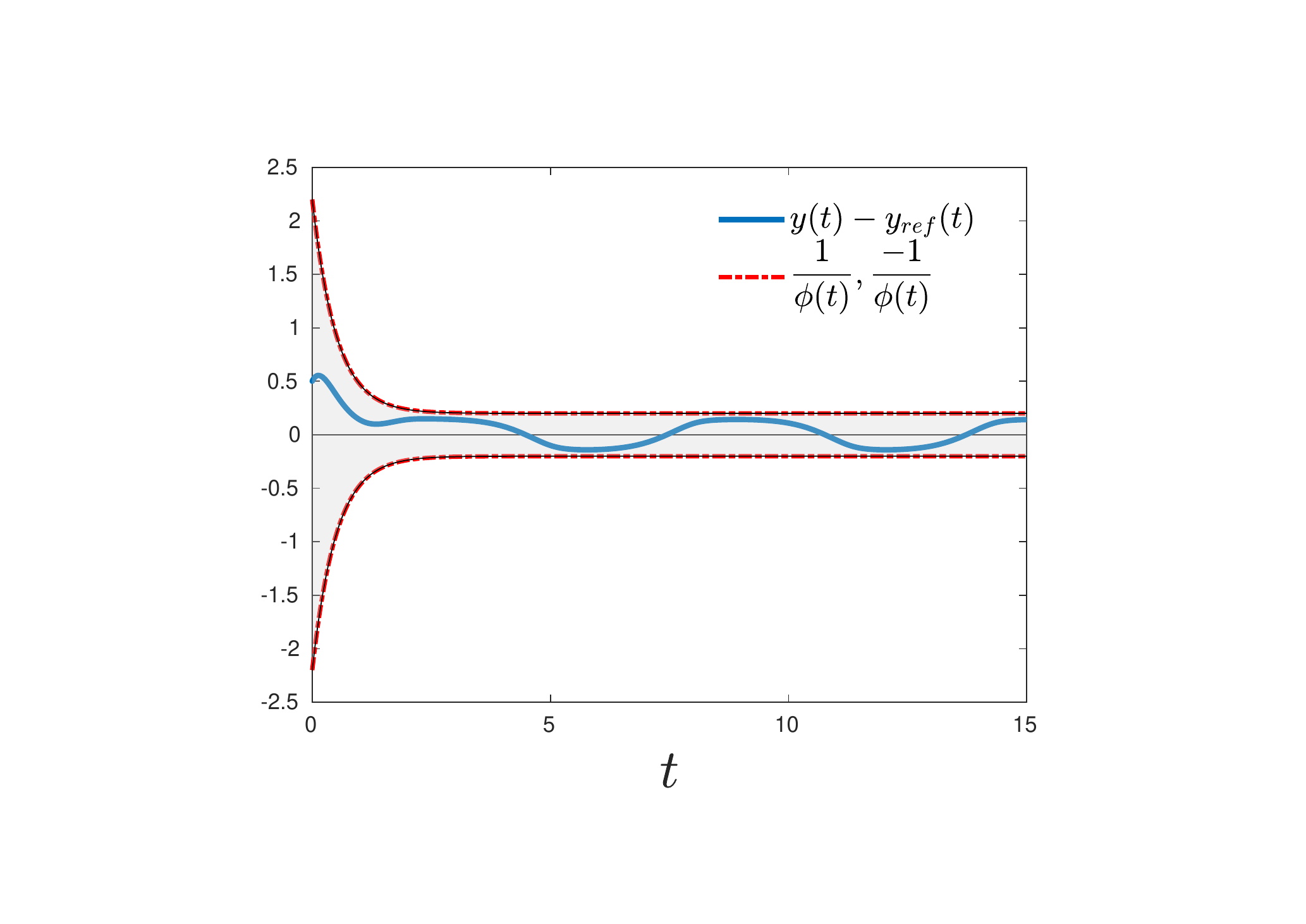}
    \caption{Output error tracking $e(t) = y(t) - y_{ref}(t)$ with the funnel whose boundaries are the functions $-\frac{1}{\phi(t)}$ and $\frac{1}{\phi(t)}$.}
    \label{fig:Error_VS_Funnel}
\end{figure}
\begin{figure}[H]
    \centering
    \includegraphics[trim=0cm 1.5cm 0cm 0cm,scale=0.45]{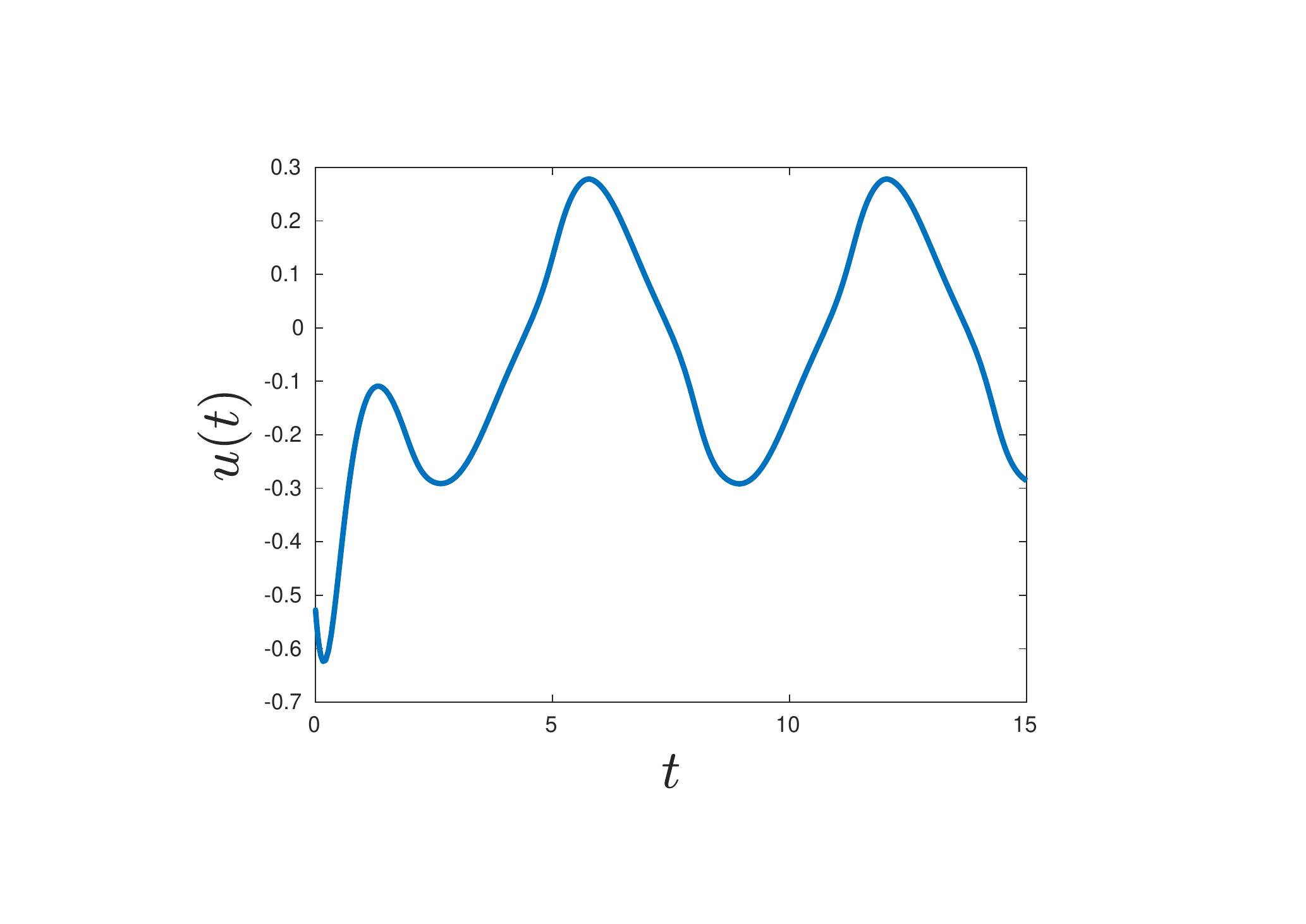}
    \caption{Funnel controller $u(t) = \frac{-e(t)}{1-\phi^2(t)e^2(t)}$.}
    \label{fig:Input}
\end{figure}

\section{Conclusion and perspectives}\label{Section_CCL}
BIBO stability for nonlinear infinite-dimensional systems with general input and output operators has been considered in this paper. By expressing the nonlinear system as the interconnection between an extended linear system and a nonlinear feedback, it has been shown how BIBO stability of the original linear system was linked to BIBO stability of the extended linear system. Then, by considering different types of globally Lipschitz nonlinear operators, $\L^\infty$-BIBO stability of the original nonlinear system has been studied. In the case where the linear operator $A$ generates an analytic semigroup, we showed that sufficient conditions for the nonlinear system to be BIBO stable are the $\L^\infty$-admissibility of the operator $(-A)^\alpha B$ for some $\alpha\in [0,1)$ together with the inequality $C_{2,\infty}L<1$, $C_{2,\infty}$ and $L$ standing for the $\L^\infty$-admissibility constant of the operator $(-A)^\alpha$ and the Lipschitz constant of the nonlinear operator, respectively. This is the main contribution of our paper. We applied our results to build a funnel controller for a nonlinear tubular reactor model coupled with a continuous stirred tank reactor with unbounded input and output operators. 

As future works, relaxing the global Lipschitz continuity assumption should be investigated. This would considerably enlarge the applicability of the results. Moreover, investigating BIBO stability in the case where the nonlinearity enters into the dynamics via an unbounded operator and in the case where an unbounded operator acts on the second component of the output $\tilde{y}$ is a challenge for further research. We finally mention that the assumption that the extended linear system is BIBO requires that the original linear system is exponentially stable, see \Cref{rem:extendedlinearsys}. This can be an unnecessary strong assumption, as the nonlinearity may in fact cause the stability rather than the linear dynamics, see \Cref{Section_Local_Lipschitz}.

\section*{Acknowledgments}
The authors are thankful to Prof. H.J. Zwart (UTwente) for valuable discussions on the subject.

\bibliographystyle{abbrv} 
\bibliography{biblio}   

\end{document}